\newcommand{\QQ}{{\overline{\bf{Q}}}}
\newcommand{\C}{{\bf{C}}}
\newcommand{\OO}{{\mathcal O}}
\newcommand{\Fc}{\mathscr{F}}
\newcommand{\from}{{\colon}}
\newcommand{\injects}{\hookrightarrow}
\newcommand{\isom}{\cong}
\newcommand{\eps}{\varepsilon}
\DeclareMathOperator{\ch}{char}
\DeclareMathOperator{\LT}{LT}
\DeclareMathOperator{\Spf}{Spf}
\DeclareMathOperator{\ts}{ts}
\DeclareMathOperator{\End}{End} \DeclareMathOperator{\Hom}{Hom}
\DeclareMathOperator{\Gal}{Gal}
\DeclareMathOperator{\GL}{GL}\DeclareMathOperator{\Spec}{Spec}
\DeclareMathOperator{\Aut}{Aut}
\DeclareMathOperator{\tr}{Tr}
\DeclareMathOperator{\Ind}{Ind}
\DeclareMathOperator{\SL}{SL}
\numberwithin{equation}{subsection}
\newtheorem{Theorem}{Theorem}[section]
\newtheorem{lemma}[Theorem]{Lemma}
\newtheorem{prop}[Theorem]{Proposition}
\newtheorem{conj}[Theorem]{Conjecture}
\theoremstyle{definition}
\newtheorem{defn}[Theorem]{Definition}
\newtheorem{rmk}[Theorem]{Remark}
\newcommand{\abs}[1]{\left\lvert #1 \right\rvert}
\newcommand{\set}[1]{\left\{ #1 \right\}}
\newcommand{\tbt}[4]{\left(\begin{matrix}#1 & #2\\#3 & #4\end{matrix}\right)}
\newcommand{\powerseries}[1]{\llbracket #1 \rrbracket}
\newcommand{\tatealgebra}[1]{\langle #1 \rangle}
\newcommand{\ps}{\powerseries}
\newcommand{\ta}{\tatealgebra}
\newcommand{\nr}{{\text{nr}}}
\newcommand{\F}{{\mathbf{F}}}
\newcommand{\FF}{{\overline{\mathbf{F}}}}
\newcommand{\gp}{{\mathfrak{p}}}
\DeclareMathOperator{\Frob}{Frob}
\DeclareMathOperator{\Spm}{Spm}
\DeclareMathOperator{\N}{N}
\DeclareMathOperator{\univ}{univ}
\DeclareMathOperator{\sgn}{sgn}
\title{Good reduction of affinoids on the Lubin-Tate tower}
\author{Jared Weinstein}
\thanks{The author was supported by NSF Postdoctoral Fellowship DMS-0803089.}
\email{jared@math.ucla.edu}
\address{Dept. of Mathematics\\ University of California, Los Angeles\\ Box 951555\\ Los Angeles, CA 90095-1555}
\keywords{Lubin-Tate spaces, local Langlands correspondence, semistable model, rigid analysis}
\begin{document}

\begin{abstract}
We analyze the geometry of the tower of Lubin-Tate deformation spaces,
which parametrize deformations of a one-dimensional formal module of
height $h$ together with level structure.  According to the conjecture
of Deligne-Carayol, these spaces realize the local Langlands
correspondence in their $\ell$-adic cohomology.  This conjecture is
now a theorem, but currently there is no purely local proof.  Working
in the equal characteristic case, we find a family of affinoids in the
Lubin-Tate tower with good reduction equal to a rather curious
nonsingular hypersurface, whose equation we present explicitly.
Granting a conjecture on the $L$-functions of this hypersurface, we
find a link between the conjecture of Deligne-Carayol and the theory
of Bushnell-Kutzko types, at least for certain class of wildly
ramified supercuspidal representations of small conductor.
\end{abstract}

\maketitle

\section{Introduction}

Let $F$ be a non-archimedean local field.    By the local Langlands correspondence, the irreducible admissible representations of $\GL_h(F)$ are parametrized in a systematic way by $h$-dimensional representations of the Weil-Deligne group of $F$.    This is established in~\cite{LRS} for fields of positive characteristic and in ~\cite{HenniartPreuveSimple} and~\cite{HarrisTaylor:LLC} for $p$-adic fields.   The local Langlands correspondence appears in a geometric context;  namely it is realized in the cohomology of the ``Lubin-Tate tower", a projective system of deformation spaces of a one-dimensional formal $\OO_F$-module of height $h$, cf.~\cite{DrinfeldEllipticModules}.  We refer to this phenomenon as the conjecture of Deligne-Carayol, after the paper ~\cite{Carayol:nonabelianLT} which contains the precise statement of the conjecture.  The papers~\cite{Carayol:ladicreps} and~\cite{Carayol:Mauvaise} prove the conjecture for the case $h=2$.   The complete conjecture of Deligne-Carayol was proved in~\cite{Boyer} for fields of positive characteristic and in~\cite{HarrisTaylor:LLC} for $p$-adic fields.  Both papers involve embedding $F$ into a global field and appealing to results from the theory of Shimura varieties or Drinfeld modular varieties.

In~\cite{Harris}, Harris identifies some unsettled problems in the study of the local Langlands correspondence, and top among these is the lack of a purely local proof of the correspondence.     Bushnell and Kutzko's theory of types~\cite{BushnellKutzko} parametrizes admissible representations of $\GL_h(F)$ by finite-dimensional characters of open compact-mod-center subgroups.  Naturally one hopes to link the parametrization by types to the parametrization by Weil-Deligne representations, so that one might obtain an ``explicit local Langlands correspondence."  There have been some remarkable efforts in this direction, see~\cite{HenniartLanglandsKazhdan}, \cite{BushnellHenniart:I},~\cite{BushnellHenniart:II},~\cite{Henniart:Bushnell}, but these do not seem to interface with the geometric interpretation of the local Langlands correspondence afforded by the conjecture of Deligne-Carayol.   Harris asks (\cite{Harris}, Question 9) whether the Bushnell-Kutzko types can be realized in the cohomology of analytic subspaces of the Lubin-Tate tower.

In the present effort we demonstrate progress towards an affirmative answer to this question.   We construct a family of open affinoids $\mathfrak{Z}$ of the Lubin-Tate tower which have good reduction equal to a hypersurface $\overline{\mathfrak{Z}}$ whose equation we give explicitly, cf. Thm.~\ref{mainthm} below.  The cohomology of these affinoids appears to contain exactly the Bushnell-Kutzko types for those supercuspidal representations whose Weil parameters are of the form $\Ind_{E/F}\theta$, where $E/F$ is the unramified extension of degree $h$ and $\theta$ is a character of the Weil group of $E$ of conductor $\gp_E^2$, where $\gp_E$ is the maximal ideal of $\OO_E$.  We refer to these as the unramified supercuspidals of level $\pi^2$.  The action of the Weil group on $\overline{\mathfrak{Z}}$ is completely transparent.  The question of whether the affinoids $\mathfrak{Z}$ really do realize the local Langlands correspondence for such representations is reduced to the calculation of certain $L$-functions attached to $\overline{\mathfrak{Z}}$, see Conj.~\ref{mainconj}.

It is hoped that this paper will initiate a systematic study of open affinoids with good reduction in the Lubin-Tate tower.   The best outcome would be the construction of a semistable model for the Lubin-Tate spaces, using an appropriate covering by open affinoids.   This is precisely what is done in~\cite{coleman:Xp3} for the classical modular curves $X_0(Np^3)$, and in~\cite{WeinsteinStableModels} for Lubin-Tate curves with arbitrary level structure.  Then the weight spectral sequence of Rapoport-Zink~\cite{RZ} would compute the cohomology of the Lubin-Tate tower in terms of the reduction of the semistable model.  A purely local proof of the conjecture of Deligne-Carayol would then be reduced to the computation of the zeta functions associated to the components of the reduction of the semistable model.

Before stating our main theorem, we introduce some notation.    We write $\mathfrak{X}(\pi^n)$, $n\geq 0$, for the system of rigid-analytic spaces comprising the Lubin-Tate tower of deformations of a height $h$ one-dimensional formal $\OO_F$-module with Drinfeld level $\pi^n$ structure;  see \S\ref{definitions} for definitions.  Crucial to the analysis are the ``canonical points" of $\mathfrak{X}(\pi^n)$ arising from the canonical liftings of Gross~\cite{Gross:canonical}:  these are the deformations with extra endomorphisms by the ring of integers in a separable extension $E/F$.  Such a point is defined over the extension $E_n/\hat{E}^{\nr}$ obtained by adjoining the $\pi^n$-division points of a formal Lubin-Tate $\OO_E$-module of height one.

In our analysis we concentrate on those canonical points for which the associated extension $E/F$ is unramified.  We refer to these as {\em unramified canonical points}.  By performing explicit computations with coordinates, we find certain affinoid neighborhoods around each unramified canonical point $x$ which have good reduction.  These neighborhoods lie in a space intermediate in the covering $\mathfrak{X}(\pi^2)\to\mathfrak{X}(\pi)$, which we call $\mathfrak{X}(K_{x,2})=\mathfrak{X}(\pi^2)/K_{x,2}$;  for details, see~\S\ref{invcoord}. Briefly put, $x$ determines an embedding of $\OO_F$-algebras $\OO_E\injects M_n(\OO_F)$, and $K_{x,2}$ is the congruence subgroup defined by
\[ K_{x,2}= \set{g\in 1+\pi M_n(\OO_F)\biggm\vert \tr((g-1)\OO_E)\subset\gp_F^2}. \]

Our main result is:
\begin{Theorem}
\label{mainthm}
Assume that $F$ has positive characteristic, with residue field $\F_q$.   Let $x\in\mathfrak{X}(\pi^2)$ be an unramified canonical point.  There exists an open affinoid neighborhood $\mathfrak{Z}$ of the image of $x$ in $\mathfrak{X}(K_{x,2})$ whose reduction is the smooth hypersurface $\overline{\mathfrak{Z}}$ in the variables $V_1,\dots,V_h$ defined by the equation
\[ \det\left(\begin{matrix}
V_1^{q^h}-V_1 & V_2^{q^h}-V_2 & V_3^{q^h}-V_3 & \cdots & V_{h-1}^{q^h} - V_{h-1} & V_h^{q^h}-V_h
\\
1 & V_1^q & V_2^q & \cdots & V_{h-2}^q & V_{h-1}^q
\\
0 & 1 & V_1^{q^2} & \cdots & V_{h-3}^{q^2} & V_{h-2}^{q^2}
\\
\vdots & & & \ddots & & \vdots
\\
0 & 0 & 0 & \cdots& 1 & V_1^{q^{h-1}} \end{matrix}\right)=0.\]
\end{Theorem}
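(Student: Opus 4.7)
The plan is to prove the theorem by explicit local coordinates on the tower. Since $F$ has characteristic $p$, I would work with $F=\F_q\powerseries{\pi}$ and fix the unramified extension $E/F$ of degree $h$. The canonical point $x$ is represented by the height-one Lubin-Tate $\OO_E$-module $\mathcal{G}$ (viewed as an $\OO_F$-module of height $h$), which I take in a Drinfeld-normalized coordinate so that $[\pi]_\mathcal{G}(T)=\pi T+T^q$. Fix elements $t_1,t_2$ of an algebraic closure of $F$ with $t_1\in\mathcal{G}[\pi]$ a generator (so $t_1^{q-1}=-\pi$) and $[\pi]_\mathcal{G}(t_2)=t_1$; the canonical Drinfeld level $\pi^2$ structure at $x$ is then determined by $t_2$ together with the $\OO_E$-action, namely the $h$-tuple $(x_i)_{i=1}^h$ with $x_i=[\omega^{i-1}]_{\mathcal{G}}(t_2)$ for $\omega\in\OO_E$ a lift of a generator of $\F_{q^h}/\F_q$.

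A point of $\mathfrak{X}(\pi^2)$ in a neighborhood of $x$ is given by a deformation $\mathcal{F}$ of the formal module together with a Drinfeld basis $(X_1,\dots,X_h)$ of $\mathcal{F}[\pi^2]$. I would introduce renormalized coordinates $V_i:=(X_i-x_i)/t_1$ for $i=1,\dots,h$; this choice of scale $t_1$ is essentially forced by the Newton polygon of $[\pi^2]_\mathcal{F}$ at the canonical point, and is the unique scale producing nontrivial reductions on an affinoid neighborhood. The definition of $K_{x,2}$ via $\tr((g-1)\OO_E)\subset\gp_F^2$ is designed so that, on the quotient $\mathfrak{X}(K_{x,2})$, the remaining level-structure freedom is exactly parametrized by $V_1,\dots,V_h$, with the rest of the $\GL_h$-orbit quotiented out; this matching requires unwinding the embedding $\OO_E\hookrightarrow M_h(\OO_F)$ determined by $x$.

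The heart of the argument is extracting the hypersurface equation from the Drinfeld condition. This condition forces the formal product
\[ \prod_{(a_1,\dots,a_h)\in(\OO_F/\pi)^h}\bigl(T-[a_1]_\mathcal{F} X_1+_\mathcal{F}\cdots+_\mathcal{F}[a_h]_\mathcal{F} X_h\bigr)=[\pi]_\mathcal{F}(T)\cdot(\text{unit}), \]
together with its level $\pi^2$ analogue. Substituting $X_i=x_i+t_1V_i$, using the recursive relations $[\pi]_\mathcal{G}(t_2)=t_1$ and $t_1^{q-1}=-\pi$, and collecting contributions at $T^{q^k}$ for $k=0,1,\dots,h-1$, one obtains a system of $h$ linear relations among the quantities $V_i^{q^h}-V_i$. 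Iterating the height-one Frobenius identity $[\pi]_\mathcal{G}(S)\equiv S^q\pmod\pi$ produces the staircase of $1$'s on the subdiagonal of the matrix and the successive Frobenius powers $V_j^{q^k}$, while the top row $V_i^{q^h}-V_i$ encodes the compatibility between the degree-$q^h$ multiplication-by-$\pi$ on $\mathcal{F}$ and the height-one $\OO_E$-structure. The consistency condition for this linear system to have a nontrivial solution is the vanishing of the displayed determinant, giving the equation of $\overline{\mathfrak{Z}}$.

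Smoothness of $\overline{\mathfrak{Z}}$ should then follow from a direct Jacobian computation: the partial derivative of the determinant with respect to $V_i$ receives a contribution $-1$ from the $V_i$ appearing in the top row, multiplied by the corresponding cofactor, and this cofactor is a lower-Hessenberg matrix with $1$'s on the subdiagonal whose determinant cannot vanish identically. The main obstacle will be extracting the determinantal form cleanly from the Drinfeld product expansion in the third step, because many monomials in the $V_i$ contribute at various orders in $\pi$ and $t_1$, and one must identify the correct normalization of the equation before reducing modulo the maximal ideal; a secondary difficulty is verifying the precise way the $K_{x,2}$-quotient acts on the $V_i$ so that the equation descends to $\mathfrak{X}(K_{x,2})$.
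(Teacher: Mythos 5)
Your general strategy of working in explicit local coordinates near the canonical point is the right one, and is also the paper's, but the identification you make of the theorem's variables $V_1,\dots,V_h$ with renormalized level-$\pi^2$ coordinates $(X_i^{(2)}-x_i)/t_1$ is incorrect, and this derails the rest of the argument.

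In the theorem (and in the paper's proof), the first $h-1$ variables are not level coordinates at all: $V_i=\pi^{-1}u_i$ for $i=1,\dots,h-1$ are the renormalized \emph{deformation parameters} of the universal deformation, pulled back from the subdisc $\mathfrak{X}(1)^1\subset\mathfrak{X}(1)$ defined by $v(u_i)\geq 1$. Only the last variable $V_h$ encodes level structure, and it is not a rescaling of a single $X_i^{(2)}$: up to sign, $V_h$ is the function $Y=\sum_{i}\beta^{q^i}Y(\zeta^{q^i})$, where $Y(\zeta)=(-1)^{h-1}(W(\zeta)-w(\zeta))/\Delta$ and $W(\zeta)$ is a sum of Moore determinants in the level-$\pi$ and level-$\pi^2$ coordinates twisted by $\zeta\in\OO_E$. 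Engineering $Y$ so that its stabilizer in $U^1_\mathfrak{A}$ is exactly $K_{x,2}$ is the technical heart of the proof. The asymmetry is visible in the determinant itself: expanding along the last column, the equation is an Artin--Schreier relation $V_h^{q^h}-V_h=d(V_1,\dots,V_{h-1})$, which matches the fact that $\mathfrak{Z}\to\mathfrak{X}(\pi)^{1,x}$ is an \'etale cover with Galois group $U^1_\mathfrak{A}/K_{x,2}\cong\F_{q^h}$, and $\mathfrak{X}(\pi)^{1,x}$ is a polydisc with integral coordinates $V_1,\dots,V_{h-1}$.

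Your proposed coordinates $V_i=(X_i^{(2)}-x_i)/t_1$ are not invariant under $K_{x,2}$: an element $1+\pi M\in K_{x,2}$ with $M\not\equiv 0\pmod\pi$ translates $X_i^{(2)}$ by the nonzero $\pi$-torsion point $\sum_j[M_{ij}]_{\Fc^{\univ}}(X_j^{(1)})$, so these $h$ functions do not descend to $\mathfrak{X}(K_{x,2})$, which is where $\mathfrak{Z}$ and its reduction live. No rescaling or linear recombination makes all $h$ of them simultaneously $K_{x,2}$-invariant, because $\mathfrak{Z}\to\mathfrak{X}(\pi)^{1,x}$ contributes only a single new direction, not $h$. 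Relatedly, the ``consistency condition of a linear system in $V_i^{q^h}-V_i$'' mechanism has no analogue in the actual proof: the determinant arises as the explicit right-hand side of the Artin--Schreier equation, obtained by substituting the approximation of $X_r^{(1)}$ in terms of $V_1,\dots,V_{h-1}$ and $x_r$ (Prop.~\ref{trivialization}) into the identity $[\pi]_{\LT}(W(\zeta))=\det\bigl(\zeta(X_r)\mid X_r^q\mid\cdots\mid X_r^{q^{h-1}}\bigr)$ and then telescoping over $\zeta^{q^i}$ with the dual basis $\beta^{q^i}$.

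One smaller point: your normalization $t_1^{q-1}=-\pi$ is off; the nonzero $\pi$-torsion of $\Fc_0$ (with $[\pi]_{\Fc_0}(X)=\pi X+X^{q^h}$) satisfies $t_1^{q^h-1}=-\pi$ and has valuation $1/(q^h-1)$, and in any case the scale factor actually used to normalize the level direction in the paper is $\Delta$, of valuation $1/(q-1)$.
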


\begin{rmk}\label{hypersurfaceremark}  Let $R$ be the noncommutative polynomial ring $\mathbf{F}_{q^h}[\tau]/(\tau^{h+1})$, whose multiplication law is given by $\tau\alpha=\alpha^q\tau$, $\alpha\in \mathbf{F}_{q^h}$.  Let $A=R\otimes_{\mathbf{F}_{q^h}} \mathbf{F}_{q^h}[V_1,\dots,V_h]$, and let $\Phi\from A\to A$ be the $R$-linear endomorphism which sends $V_i$ to $V_i^q$.
Let $g=1+V_1\tau+\dots+V_h\tau^h\in A^\times$;  then the coefficient of $\tau^n$ in $\Phi^{h}(g)g^{-1}$ is the determinant appearing in Thm.~\ref{mainthm}.  This shows that the hypersurface $\overline{\mathfrak{Z}}$ admits a large group of automorphisms, namely $R^\times$.  See \S\ref{Bcross} for an interpretation of this automorphism group in terms of the Jacquet-Langlands correspondence.
\end{rmk}

\begin{rmk}  We expect the condition $\ch F>0$ to be unnecessary.  This condition enables us to write down explicit models for universal deformations of formal $\OO_F$-modules with level structure, as in \S\ref{univdefcharp}.  It may be possible to remove this condition if one is more careful with error terms.
\end{rmk}

\begin{rmk} In Yoshida's paper~\cite{yoshida} the space $\mathfrak{X}(\pi)$ is treated, with no condition on the characteristic of $F$.   In that case one finds an affinoid subdomain of $\mathfrak{X}(\pi)\otimes E_1$ whose reduction is the Deligne-Lusztig variety for $\GL_h(k)$, see \S\ref{tame}.   Based on this calculation, Yoshida proceeds to show that the vanishing cycles of $\mathfrak{X}(\pi)$ realize the local Langlands correspondence for supercuspidal representations ``of depth zero".
\end{rmk}

\begin{rmk}  Thm.~\ref{mainthm} agrees well with our work in~\cite{WeinsteinStableModels}, which gives a detailed description of a stable reduction of the tower $\mathfrak{X}(\pi^n)$ when $h=2$.  In this case the curve $\overline{\mathfrak{Z}}$ is isomorphic over $\mathbf{F}_q$ to a disjoint union of copies of the ``Hermitian curve" $Y+Y^q=V^{q+1}$.   The Hermitian curve also happens to be isomorphic over $\overline{\mathbf{F}}_q$ to the Deligne-Lusztig curve for $\SL_2(\F_q)$, but this seems to be a coincidence which does not persist for $h>2$.
\end{rmk}

In order to apply Thm.~\ref{mainthm} to the conjecture of Deligne-Carayol, it will be necessary to calculate the compactly supported $\ell$-adic cohomology of $\overline{\mathfrak{Z}}$, $\ell\neq p$, as a module for the action of the stabilizer of $\mathfrak{Z}$ in $\GL_2(\OO_F)$, which is the group $U^1=1+\pi M_h(\OO_F)$.  This in turn is equivalent to the calculation of the $L$-functions of some $\ell$-adic sheaves on affine $(h-1)$-space.  To wit, let $X$ be the hypersurface over $\F_{q^h}$ whose equation is the one appearing in Thm.~\ref{mainthm}.  Then $X$ is an Artin-Schreier cover of $\mathbf{A}^{h-1}/\F_{q^h}$ with Galois group $\F_{q^h}$.  For each character $\psi$ of $\F_{q^h}$ with values in $\QQ_\ell^\times$, let $\mathscr{L}_\psi$ be the corresponding lisse rank one sheaf on $\mathbf{A}^{h-1}$.  Then the zeta function $Z(X,t)$ factors as a product of the $L$-functions $L(\mathbf{A}^{h-1},\mathscr{L}_\psi,t)$ as $\psi$ runs over characters of $\F_{q^h}$.
\begin{conj}
\label{mainconj}
Suppose $\psi$ does not factor through $\tr_{\F_{q^h}/\F_{q^d}}$ for any proper divisor $d$ of $h$.  Then
\[
L(\mathbf{A}^{h-1},\mathscr{L}_\psi,t) = \left(1+(-1)^hq^{\frac{h(h-1)}{2}}t\right)^{(-1)^hq^{\frac{h(h-1)}{2}}}.
\]
\end{conj}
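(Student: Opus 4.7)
The plan is to reformulate Conj.~\ref{mainconj} as an exponential-sum identity, verify it at the first level by a direct calculation, and then upgrade this to all levels by exploiting the large symmetry group from Remark~\ref{hypersurfaceremark} together with Deligne's purity. Set $N := h(h-1)/2$ and $\lambda := (-1)^{h+1} q^{N}$. Expansion along the first row of the defining matrix shows that the $(1,h)$-cofactor of $V_h^{q^h}-V_h$ is the determinant of an upper-triangular matrix with $1$'s on the diagonal, hence equals $(-1)^{h+1}$. One may therefore solve the equation of Thm.~\ref{mainthm} in Artin--Schreier form $V_h^{q^h}-V_h = f(V_1,\ldots,V_{h-1})$ with
\[
f = (-1)^{h+1}\sum_{j=1}^{h-1}(-1)^{j}(V_j^{q^h}-V_j)\,M_{1j}(V_1,\ldots,V_{h-1}),
\]
so that $\mathscr{L}_\psi$ is the pullback via $f$ of the standard Artin--Schreier sheaf on $\mathbf{A}^1$. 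A short manipulation of $\log L$ converts the conjectural $L$-function into
\[
T_n(\psi) := \sum_{v\in \mathbf{F}_{q^{hn}}^{h-1}} \psi\bigl(\Tr_{\mathbf{F}_{q^{hn}}/\mathbf{F}_{q^h}}f(v)\bigr) = \lambda^{n+1} \quad (n\geq 1),
\]
which cohomologically predicts that $H^{h-1}_c(\mathbf{A}^{h-1}_{\overline{\mathbf{F}}_{q^h}},\mathscr{L}_\psi)$ is the only non-vanishing cohomology group, is of dimension $q^N$, and carries Frobenius by the scalar $\lambda$.

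The base case $n=1$ is immediate from the explicit shape of $f$: every nonzero entry in the first row of the defining matrix is of the form $V_i^{q^h}-V_i$, so $f$ lies in the ideal $(V_1^{q^h}-V_1,\ldots,V_{h-1}^{q^h}-V_{h-1})$ and therefore vanishes identically on $\mathbf{F}_{q^h}^{h-1}$. This gives $T_1(\psi) = q^{h(h-1)} = \lambda^{2}$ for every character $\psi$, and pins down the product $(-1)^{h-1}\cdot m\cdot \alpha$ of multiplicity and scalar in the predicted cohomology. The real work is then to identify the actual $R^\times$-module structure of $H^*_c(\mathbf{A}^{h-1}_{\overline{\mathbf{F}}_{q^h}},\mathscr{L}_\psi)$. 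Using Remark~\ref{hypersurfaceremark}, conjugation by $r\in R^\times$ on $g = 1+V_1\tau+\cdots+V_h\tau^h$ preserves $\overline{\mathfrak{Z}}$, and, after projecting onto the $V_1,\ldots,V_{h-1}$ coordinates, extends (up to a character of $R^\times$) to an action on $\mathscr{L}_\psi$. Under the hypothesis that $\psi$ does not factor through any proper subfield trace, the induced commutator pairing on $(1+\tau R)/\mathbf{F}_{q^h}^\times$ should be a nondegenerate alternating form of rank $h(h-1)$, so Stone--von Neumann uniqueness forces the $\psi$-isotypic part of the cohomology to be the unique irreducible representation of the corresponding Heisenberg group with central character $\psi$, of dimension exactly $q^N$. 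This dimension is precisely that of the Bushnell--Kutzko simple type alluded to in the Jacquet--Langlands interpretation at the end of Remark~\ref{hypersurfaceremark}.

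Given this identification, Deligne's Weil II together with irreducibility forces Frobenius to act on the (single) non-vanishing cohomology group by a scalar of absolute value $q^N$, and the match with $T_1(\psi)=\lambda^2$ from the first step fixes its sign to be $\lambda$. The principal obstacle is the second step. There are two technical pieces: first, actually computing the commutator pairing induced by $f$ on $1+\tau R$ modulo its center, and showing that nondegeneracy is controlled precisely by the condition that $\psi$ does not factor through a proper subfield trace; and second, establishing middle-degree concentration, which does not follow from Artin vanishing for affine schemes in the lower range and will plausibly require a Fourier/stationary-phase analysis of $\mathscr{L}_\psi$ in the style of Katz, adapted to the twisted-Frobenius shape of $f$. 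Once these two technical points are in hand, the rest of the argument --- purity plus the level-$1$ trace calculation --- is short.
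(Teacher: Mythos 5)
Note first that the target statement is a \emph{conjecture} in the paper, not a theorem: the author writes explicitly that ``we cannot give a general proof at this time'' and that the defining polynomial is degenerate in the sense of Adolphson--Sperber, which frustrates even the determination of the degree of $L(\mathbf{A}^{h-1},\mathscr{L}_\psi,t)$. So there is no proof in the paper against which to compare your argument; what I can do is evaluate the sketch on its own terms.

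Your reformulation of the conjecture as the trace identity $T_n(\psi)=\lambda^{n+1}$ is correct, and your level-one computation is right: the $(1,h)$-cofactor is the determinant of an upper-triangular matrix with unit diagonal, giving $(-1)^{h+1}$, the resulting Artin--Schreier function $f$ lies in the ideal $(V_1^{q^h}-V_1,\dots,V_{h-1}^{q^h}-V_{h-1})$, and hence $T_1(\psi)=q^{h(h-1)}=\lambda^2$ for every $\psi$. Your cohomological translation---middle-degree concentration, dimension $q^{h(h-1)/2}$, Frobenius acting by $(-1)^{h-1}q^{h(h-1)/2}$---coincides with the reformulation the paper itself records as Conj.~\ref{mainconjalt}. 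So the framing is sound.

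The argument is nevertheless incomplete, and the missing pieces are precisely the ones the paper singles out as obstructions; they are not technicalities. (i) \emph{Concentration in degree $h-1$.} Artin vanishing only gives $H^i_c(\mathbf{A}^{h-1},\mathscr{L}_\psi)=0$ for $i<h-1$; the range $h-1<i<2(h-1)$ is exactly where the difficulty lies, and the degeneracy of $f$ in the Adolphson--Sperber sense means that the standard Newton-polyhedron criteria do not apply. You acknowledge this but offer only ``a Fourier/stationary-phase analysis in the style of Katz'' as a direction. (ii) \emph{The Heisenberg input.} You need the $R^\times$-action on $X$ to descend to a genuine action of $1+\tau R$ on $H^\ast_c(\mathbf{A}^{h-1},\mathscr{L}_\psi)$ with central character $\psi$ on $1+\tau^h R$, and you need the commutator pairing on $(1+\tau R)/(1+\tau^h R)$ to be nondegenerate exactly when $\psi$ is primitive; neither is verified, and the second requires an actual computation. (iii) Even granting (i) and (ii), Stone--von Neumann gives only $H^{h-1}_c\cong V_\psi^{\oplus m}$ for some $m\geq 0$. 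Purity plus $T_1(\psi)=q^{h(h-1)}$ then yield, via the triangle inequality on Frobenius eigenvalues, only the lower bound $m\geq 1$; pinning down $m=1$ requires an independent upper bound on $\dim H^{h-1}_c$ (for instance an Euler-characteristic computation for $\mathscr{L}_\psi$), which your sketch does not supply. So ``the rest of the argument is short'' is overstated: the multiplicity step is a genuine additional gap, on top of the two you identify.
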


The formula in Conj.~\ref{mainconj} is striking:  it implies that the contribution of the $\psi$-part of the Euler characteristic $H^*_c(X\otimes\overline{\F}_q,\QQ_\ell)$ to the quantities $\#X(\F_{q^h}),\#X(\F_{q^{2h}}),\dots$ is the maximum possible under the constraints of the Riemann hypothesis for $X$.  In fact we strongly suspect that $X$ has the maximum number of $\F_{q^{hn}}$-rational points relative to its compactly supported Betti numbers.    More to the point,  Conj.~\ref{mainconj} would also imply that $H_c^{h-1}(\mathfrak{Z},\QQ_\ell)$ realizes the Bushnell-Kutzko types for the unramified supercuspidals of level $\pi^2$, and that the action of the Weil group of $F$ on $\mathfrak{Z}$ is in accord with the local Langlands correspondence.  We postpone the details of this claim for future work, but see~\cite{WeinsteinENLTT}, \S 4 and \S 5 for a comprehensive calculation in the case $h=2$.

Conj.~\ref{mainconj} itself can be verified quite easily for $h=2$, in which case $X$ is a disjoint union of $q$ copies of the Hermitian curve $Y^q+Y=X^{q+1}$:  this curve is ``maximal" over $\F_{q^2}$ in the sense that it attains the Hasse-Weil bound for the maximum number of $\F_{q^2}$-rational points.  Conj.~\ref{mainconj} can be verified numerically for small values of $q$ and $h>2$, but unfortunately we cannot give a general proof at this time.  The polynomial on the right-hand side of the equation in Thm.~\ref{mainthm} is degenerate in the sense of~\cite{AdolphsonSperber}, which frustrates efforts to determine even the degree of the rational function $L(\mathbf{A}^{h-1},\mathscr{L}_\psi,t)$.

The construction of the explicit local Langlands correspondence for unramified supercuspidals appears in~\cite{HenniartLanglandsKazhdan}.  A salient feature of that paper is the discrepancy between two means of passing from a regular character of $E^\times$ to a supercuspidal representation of $\GL_h(F)$.  The first construction is the local Langlands correspondence, the second construction is induction from a compact-mod-center subgroup, and the discrepancy, which appears exactly when $h$ is even, manifests as the nontrivial unramified quadratic character of $E^\times$.   Granting Conj.~\ref{mainconj}, we arrive at a geometric explanation for this behavior in terms of the eigenvalue of Frobenius on the middle cohomology of the hypersurface $X$, for these are positive if and only if $h$ is odd.   In the subsequent papers~\cite{BushnellHenniart:I} and~\cite{BushnellHenniart:II} on the explicit local Langlands correspondence there is a systematic treatment of this discrepancy between the two constructions in the ``essentially tame" case;  we find it very likely that this discrepancy can always be explained by the behavior of Frobenius eigenvalues acting on the cohomology of an open affinoid in the Lubin-Tate tower having good reduction.

We outline our work:  In \S2, we review the relevant background material from~\cite{DrinfeldEllipticModules} on one-dimensional formal modules and the Lubin-Tate tower.  In \S3, we impose the condition that $\ch F>0$ and establish a functorial construction of top exterior powers of one-dimensional formal $\OO_F$-modules which may be of independent interest.  The heart of the paper is \S4.  Given an unramified canonical point $x$ in $\mathfrak{X}(\pi^2)$, we construct a coordinate $Y$ on that space which is invariant under $K_{x,2}$.  The coordinate $Y$ is integral on a certain affinoid neighborhood of $x$ in $\mathfrak{X}(\pi^2)$, and the reduction of the minimal polynomial for $Y$ over the ring of integral functions on $\mathfrak{X}(1)$ gives the equation appearing in Thm.~\ref{mainthm}.  We conclude in \S5 with some basic observations about the hypersurface $\overline{\mathfrak{Z}}$ which we hope will illuminate the formulas in Conj.~\ref{mainconj} and motivate future work linking Thm.~\ref{mainthm} to the local Langlands and Jacquet-Langlands correspondences for $\GL_h(F)$.

\section{Preliminaries on formal modules}

\subsection{Definitions}
\label{definitions}
Throughout this paper, $F$ is a local non-archimedean field with ring of integers $\OO_F$, uniformizer $\pi$ and residue field $k$ having cardinality $q$, a power of the prime $p$.  Let $\gp$ be the maximal ideal of $\OO_F$, and let $v$ be the valuation on $F$, normalized so that $v(\pi)=1$.   We also use $v$ for the unique extension of this valuation to finitely ramified extension fields $E$ of $F$ contained in the completion of the separable closure of $F$.

\begin{defn}  Let $R$ be a commutative $\OO_F$-algebra, with structure map $i\from \OO_F\to R$.  A {\em formal one-dimensional $\OO_F$-module} over $R$ is a power series $\Fc(X,Y)=X+Y+\dots\in R\ps{X,Y}$ which is commutative, associative, admits 0 as an identity, together with a power series $[a]_\Fc(X)\in R\ps{X}$ for each $a\in \OO_F$ satisfying $[a]_\Fc(X)\equiv i(a)X\pmod{X^2}$ and $\Fc([a]_\Fc(X), [a]_\Fc(Y))=[a]_\Fc(\Fc(X,Y))$.
\end{defn}

The addition law on a formal $\OO_F$-module $\Fc$ will usually be written $X+_\Fc Y$.
If $\Fc$ and $\Fc'$ are two formal $\OO_F$-modules, there is an evident notion of an  isogeny $\Fc\to \Fc'$, and $\Hom(\Fc,\Fc')$ has the structure of an $\OO_F$-module.

If $R$ is a $k$-algebra, we either have $[\pi]_\Fc(X)=0$ or else $[\pi]_\Fc(X)=f(X^{q^h})$ for some power series $f(X)$ with $f'(0)\neq 0$.  In the latter case, we say $\Fc$ has height $h$ over $R$.

Fix an integer $h\geq 1$.  Let $\Sigma$ be a one-dimensional formal $\OO_F$-module over $\overline{k}$ of height $h$.  The functor of deformations of $\Sigma$ to complete local Noetherian $\hat{\OO}_{F^{\nr}}$-algebras is representable by a universal deformation $\Fc^{\univ}$ over an algebra $\mathcal{A}$ which is isomorphic to the power series ring $\hat{\OO}_{F^{\nr}}\ps{u_1,\dots,u_{h-1}}$ in $(h-1)$ variables, cf.~\cite{DrinfeldEllipticModules}.   That is, if $A$ is a complete local $\hat{\OO}_F^{\nr}$-algebra with maximal ideal $P$, then the isomorphism classes of deformations of $\Sigma$ to $A$ are given exactly by specializing each $u_i$ to an element of $P$ in $\Fc^{\univ}$.

\subsection{The universal deformation in the positive characteristic case}
\label{univdefcharp}   The results of the previous paragraph take a very simple form in the equal characteristic case.  Assume $\ch F=p$, so that $F=k((\pi))$ is the field of Laurent series over $k$ in one variable, with $\OO_F=k\ps{\pi}$.  Then a model for $\Sigma$ is given by the simple rules
\begin{align*}
X+_\Sigma Y &= X+Y\\
[\zeta]_{\Sigma}(X)&=\zeta X,\;\zeta\in k \\
[\pi]_{\Sigma}(X)&= X^{q^h}
\end{align*}
The universal deformation $\Fc^{\univ}$ also has a simple model over $\mathcal{A}$:
\begin{align}
X+_{\Fc^{\univ}} Y&=X+Y \nonumber\\
[\zeta]_{\Fc^{\univ}}(X)&=\zeta X,\;\zeta\in k \nonumber\\
\label{piuniv}
[\pi]_{\Fc^{\univ}}(X)&= \pi X + u_1X^q + \dots+  u_{h-1}X^{q^{h-1}}+X^{q^h}.
\end{align}

Let $\OO_B=\End\Sigma$, and let $B=\OO_B\otimes_{\OO_F} F$.  Then $B$ is the central division algebra over $F$ of invariant $1/h$.  Let $k_h/k$ be the field extension of degree $h$:  then $\OO_B$ is generated by the unramified extension $\OO_{E}=k_h\ps{\pi}$ of $\OO_K$ of degree $h$, which acts on $\Sigma$ in an evident way, together with the endomorphism $\Phi(X)=X^q$.  (The relations are $\Phi^h=\pi$ and $\Phi \zeta=\zeta^q\Phi$, $\zeta\in k_h$.)   Inasmuch as $\mathcal{A}=\hat{\OO}_F^\nr\ps{u_1,\dots,u_{h-1}}$ is the moduli space of deformations of $\Sigma$, the automorphism group $\Aut\Sigma=\OO_B^\times$ acts naturally on $\mathcal{A}$.    It is natural to ask how $\OO_B^\times$ acts on the level of coordinates.  The action of an element $\zeta\in k_n^\times$ is simple enough: $\zeta(u_i) = \zeta^{q^i-1} u_i$, $i=1,\dots,h-1$.  On the other hand the action of an element such as $1+\Phi\in \OO_B^\times$ seems difficult to give explicitly.

\subsection{Moduli of deformations with level structure}  Let $A$ be a complete
local $\OO_F$-algebra with maximal ideal $M$, and let $\Fc$ be a one-dimensional formal $\OO_F$-module over $A$, and let $h\geq 1$ be the height of $\Fc\otimes A/M$.
\begin{defn} Let $n\geq 1$.  A {\em Drinfeld level $\pi^n$ structure} on $\Fc$ is an $\OO_F$-module homomorphism $\phi\from (\pi^{-n}\OO_F/\OO_F)^{\oplus h}\to M$ for which the relation \[\prod_{x\in (\gp^{-1}/\OO_F)^{\oplus h}}\left(X-\phi(x)\right)\biggm\vert [\pi]_{\Fc}(X)\] holds in $A\ps{X}$.  If $\phi$ is a Drinfeld level $\pi^n$ structure, the images under $\phi$ of the standard basis elements $(\pi^{-n},0,\dots,0),\dots,(0,0,\dots,\pi^{-n})$ of $(\gp^{-n}/\OO_F)^{\oplus h}$ form a {\em Drinfeld basis} of $\Fc[\pi^n]$.
\end{defn}
%

Fix a formal $\OO_F$-module $\Sigma$ of height $h$ over $\overline{k}$.   Let $A$ be a noetherian local $\hat{\OO}_F^{\nr}$-algebra such that the structure morphism $\hat{\OO}_F^{\nr}\to A$ induces an isomorphism between residue fields.  A {\em deformation} of $\Sigma$ with level $\pi^n$ structure over $A$ is a triple $(\Fc,\iota,\phi)$, where $\iota\from \Fc\otimes \overline{k} \to \Sigma$ is an isomorphism of $\OO_F$-modules over $\overline{k}$ and $\phi$ is a Drinfeld level $\pi^n$ structure on $\Fc$.

\begin{prop}{\cite{DrinfeldEllipticModules}}
The functor which assigns to each $A$ as above the set of deformations of $\Sigma$ with Drinfeld level $\pi^n$ structure over $A$ is representable by a regular local ring $\mathcal{A}(\pi^n)$ of relative dimension $h-1$ over $\hat{\OO}_F^\nr$.  Let $X_1^{(n)},\dots,X_h^{(n)}\in\mathcal{A}(\pi^n)$ be the corresponding Drinfeld basis for $\Fc^{\univ}[\pi^n]$;  then these elements form a set of regular parameters for $\mathcal{A}(\pi^n)$.
\end{prop}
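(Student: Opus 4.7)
The plan is to argue by induction on $n$.  The case $n=0$ is the classical Lubin-Tate result quoted at the end of \S\ref{definitions}:  the deformation functor without level structure is pro-represented by $\mathcal{A} = \hat{\OO}_F^{\nr}\ps{u_1,\dots,u_{h-1}}$, which is regular of relative dimension $h-1$ over $\hat{\OO}_F^{\nr}$.  For the inductive step, assume $\mathcal{A}(\pi^{n-1})$ has been constructed with its universal Drinfeld basis $Y_1,\dots,Y_h$.  I would build $\mathcal{A}(\pi^n)$ by adjoining $h$ new elements $X_1,\dots,X_h$ subject to the equations $[\pi]_{\Fc^{\univ}}(X_i) = Y_i$ together with the Drinfeld divisibility condition for level $\pi^n$.

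The essential tool is Weierstrass preparation.  Modulo the maximal ideal of $\mathcal{A}(\pi^{n-1})$, the power series $[\pi]_{\Fc^{\univ}}(X) - Y_i$ reduces to $[\pi]_{\Sigma}(X)$, which vanishes to order exactly $q^h$ at $X=0$ because $\Sigma$ has height $h$.  Hence $[\pi]_{\Fc^{\univ}}(X) - Y_i$ is, up to a unit in $\mathcal{A}(\pi^{n-1})\ps{X}$, a distinguished polynomial of degree $q^h$, and adjoining a single $X_i$ is a finite free operation.  Iterating this $h$ times, and then intersecting with the closed subfunctor cut out by the Drinfeld divisibility, yields the ring $\mathcal{A}(\pi^n)$.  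The universal property follows by tracking how a simultaneous solution to $[\pi]_{\Fc^{\univ}}(X_i) = Y_i$ in any target local ring classifies a Drinfeld basis of $\Fc[\pi^n]$ lifting the one at level $\pi^{n-1}$.

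The main obstacle is proving regularity and, more specifically, that $X_1,\dots,X_h$ themselves form a regular system of parameters, without the need to list $\pi$ or any of the $u_j$.  The key identity is that the Drinfeld divisibility condition implies that $[\pi^n]_{\Fc^{\univ}}(T)$ coincides, up to a unit in $\mathcal{A}(\pi^n)\ps{T}$, with the product of $(T - x)$ as $x$ ranges over the image of the Drinfeld basis;  expanding and comparing coefficients expresses $\pi$ as well as each $u_j$ and each $Y_i$ as an explicit polynomial in $X_1,\dots,X_h$.  In the equal characteristic model of \S\ref{univdefcharp} this is especially transparent, because $[\pi]_{\Fc^{\univ}}(T)$ is literally a polynomial whose coefficients are elementary symmetric functions of its $q^h$ roots.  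Consequently the Krull dimension drops by the right amount at each inductive step, leaving $\mathcal{A}(\pi^n)$ regular of the expected dimension with $X_1,\dots,X_h$ as a system of parameters.  The delicate point, particularly in the mixed characteristic setting, is verifying that the ideal generated by the Drinfeld relations really cuts out a regular quotient of the anticipated codimension inside the power series ring over $\mathcal{A}(\pi^{n-1})$ at each stage;  this careful dimension-counting is where most of the technical work of Drinfeld's original proof resides.
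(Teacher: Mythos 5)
This proposition is quoted from Drinfeld's paper and is not proved in the paper under review, so there is no in-paper proof to compare against; I will assess your proposal against the standard argument.

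Your overall shape is right (induct on $n$, Weierstrass preparation, use the Drinfeld relations to show $X_1,\dots,X_h$ generate the maximal ideal, then count dimensions), but two points need to be sharpened. First, you do not distinguish the step $n=0\to 1$ from the steps $n\geq 2$, and they are genuinely different. For $n\geq 2$ the Drinfeld divisibility is \emph{automatic}: the $\pi$-torsion of the level $\pi^n$ structure $(X_1,\dots,X_h)$ is $\bigl([\pi^{n-1}]_u(X_i)\bigr)_i=\bigl([\pi^{n-2}]_u(Y_i)\bigr)_i$, which is exactly the $\pi$-torsion of the level $\pi^{n-1}$ structure $(Y_i)$, so the divisibility for $(X_i)$ is inherited from that for $(Y_i)$. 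Hence $\mathcal{A}(\pi^n)=\mathcal{A}(\pi^{n-1})\ps{X_1,\dots,X_h}/\bigl([\pi]_u(X_i)-Y_i\bigr)$ with no further intersection needed, and it is finite free of rank $q^{h^2}$ by Weierstrass preparation. Your extra ``intersect with the Drinfeld subfunctor'' step would be a no-op here, and presenting it as if it cuts something down obscures the argument. By contrast, for $n=1$ the Drinfeld condition is the whole point: naively adjoining $h$ roots of $[\pi]_u(T)=0$ gives a ring of rank $q^{h^2}$ over $\mathcal{A}$, but $\mathcal{A}(\pi)$ has rank only $\lvert\GL_h(k)\rvert$, and the divisibility condition — specifically the identity in Eq.~\eqref{prodpi} — is exactly what carves it out and simultaneously shows $\pi,u_1,\dots,u_{h-1}$ lie in $(X_1,\dots,X_h)$. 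Your proposal should say explicitly how the $n=1$ cut is performed; this is where the real Drinfeld-specific work is.

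Second, the phrase ``the Krull dimension drops by the right amount at each inductive step'' is incorrect. The Krull dimension of $\mathcal{A}(\pi^n)$ is $h$ for every $n$ (it is finite and torsion-free over $\mathcal{A}$, which has dimension $h$); nothing drops. What must be shown is that the maximal ideal, a priori generated by $\pi,u_1,\dots,u_{h-1},Y_1,\dots,Y_h,X_1,\dots,X_h$, is already generated by $X_1,\dots,X_h$ alone. For $n\geq 2$ this is a bootstrap: by induction $\pi,u_j\in(Y_1,\dots,Y_h)$, and $Y_i=[\pi]_u(X_i)\in(X_i)$ because $[\pi]_u(T)$ has no constant term; hence $\mathfrak{m}=(X_1,\dots,X_h)$. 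For $n=1$ it follows from the coefficient comparison just mentioned. Once $\mathfrak{m}$ is generated by $h$ elements and $\dim=h$, regularity follows. With these two corrections — the automatic nature of the Drinfeld condition for $n\geq 2$, and the correct dimension count versus embedding-dimension count — your outline becomes a faithful sketch of Drinfeld's proof.
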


There is a finite injection of $\hat{\OO}_F^{\nr}$-algebras $\mathcal{A}(\pi^n)\to\mathcal{A}(\pi^{n+1})$ corresponding to the obvious degeneration map of functors.  We therefore may consider $\mathcal{A}(\pi^n)$ as a subalgebra of $\mathcal{A}(\pi^{n+1})$, with the equation $[\pi]_u\left(X_i^{(n+1)}\right)=X_i^{(n)}$ holding in $\mathcal{A}(\pi^{n+1})$.

Let $X(\pi^n)=\Spf \mathcal{A}(\pi^n)$, so that $X(\pi^n)$ is a formal scheme of relative dimension $h-1$ over $\Spf \hat{\OO}^{\nr}_F$.  Let $\mathfrak{X}(\pi^n)$ be the generic fiber of $X(\pi^n)$;  then $\mathfrak{X}(\pi^n)$ is a rigid analytic variety.  The coordinates $X_i^{(n)}$ are then analytic functions on $\mathfrak{X}(\pi^n)$ with values in the open unit disc.   We have that $\mathfrak{X}(1)$ is the rigid-analytic open unit polydisc of dimension $h-1$.

The group $\GL_h(\OO_F/\pi^n\OO_F)$ acts on the right on $\mathfrak{X}(\pi^n)$ and on the left on $\mathcal{A}(\pi^n)$.  The degeneration map $\mathfrak{X}(\pi^n)\to\mathfrak{X}(1)$ is Galois with group $\GL_h(\OO_F/\pi^n\OO_F)$.  For an element $M\in\GL_h(\OO_F/\pi^n\OO_F)$ and an analytic function $f$ on $\mathfrak{X}(\pi^n)$, we write $M(f)$ for the translated function $z\mapsto f(zM)$.  When $f$ happens to be one of the parameters $X_i^{(n)}$, there is a natural definition of $M\left(X_i^{(n)}\right)$ when $M\in M_h(\OO_F/\pi^n\OO_F)$ is an arbitrary matrix: if $M=(a_{ij})$, then
\begin{equation}
\label{MX}
M\left(X_i^{(n)}\right)=[a_{i1}]_{\Fc^{\univ}}\left(X_1^{(n)}\right)+_{\Fc^{\univ}}\dots+_{\Fc^{\univ}}[a_{ih}]_{\Fc^{\univ}}\left(X_h^{(n)}\right).
\end{equation}

\section{Determinants}
\label{determinants}
A natural first question in the study of the Lubin-Tate tower $\mathfrak{X}(\pi^n)$ is to compute its zeroth cohomology;  {\em i.e.} to determine its geometrically connected components along with the appropriate group actions.  This question is answered completely by Strauch in~\cite{Strauch:ConnComp}.   Let $\LT$ be a one-dimensional formal $\OO_F$-module over $\hat{\OO}_{F^{\nr}}$ for which $\LT\otimes \overline{k}$ has height one.  Let $F_0=\hat{F}^{\nr}$, and for $n\geq 1$, let $F_n=F_0(\LT[\pi^n])$ be the classical Lubin-Tate extension.  Let $\chi\from\Gal(F_n/F_0)\to (\OO_F/\pi^n\OO_F)^\times$ be the isomorphism of local class field theory, so that $\Gal(F_n/F_0)$ acts on $\LT[\pi^n]$ through $\chi$.   Finally, let $\mathfrak{X}_{\LT}(\pi^n)$ be the (zero-dimensional) space of deformations of $\LT\otimes\overline{k}$ with Drinfeld $\pi^n$ structure, so that $\mathfrak{X}_{\LT}(\pi^n)(F_n)$ is the set of bases for $\LT[\pi^n](F_n)$ as a free $(\OO_F/\pi^n \OO_F)$-module of rank one.   We now paraphrase~\cite{Strauch:ConnComp}, Thm. 4.4 in the context of the rigid-analytic spaces $\mathfrak{X}(\pi^n)$.
\begin{Theorem}
\label{components}
The geometrically connected components of $\mathfrak{X}(\pi^n)$ are defined over $F_n$, and there is a bijection \[  \pi_0(\mathfrak{X}(\pi^n)\otimes F_n)\tilde{\longrightarrow} \mathfrak{X}_{\LT}(\pi^n)(F_n).\]  Under this bijection, the action of an element $(g,b,\tau)$ in $\GL_h(\OO_F)\times\OO_B^\times\times \Gal(F_n/F_0)$ on $\mathfrak{X}_{\LT}(\pi^n)(F_n)$ is through the character
\begin{equation}
\label{tripleproductchar} (g,b,\tau)\mapsto \det(g)\N_{B/F}(b)^{-1}\chi(\tau)^{-1} \in (\OO_F/\pi^n\OO_F)^\times.
\end{equation}
\end{Theorem}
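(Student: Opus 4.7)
The plan is to construct a determinant morphism $\det\from \mathfrak{X}(\pi^n)\to\mathfrak{X}_{\LT}(\pi^n)$ and to prove that it induces a bijection on geometric components while transparently carrying the three group actions into the single character~\eqref{tripleproductchar}.

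First I would build $\det$ via the top exterior power construction for one-dimensional formal $\OO_F$-modules developed in \S\ref{determinants}. Applied to the universal deformation $\Fc^{\univ}$ together with its Drinfeld basis $(X_1^{(n)},\dots,X_h^{(n)})$, this furnishes a height-one formal $\OO_F$-module $\wedge^h\Fc^{\univ}$ over $\mathcal{A}(\pi^n)$ equipped with the canonical level $\pi^n$ structure $X_1^{(n)}\wedge\cdots\wedge X_h^{(n)}$. Identifying $\wedge^h\Sigma$ with $\LT\otimes\overline{k}$ produces the map to $\mathfrak{X}_{\LT}(\pi^n)$. The claim that the components of $\mathfrak{X}(\pi^n)$ are defined over $F_n$ is then immediate, since each geometric component maps to a single $F_n$-point of $\mathfrak{X}_{\LT}(\pi^n)$.

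The main obstacle is the geometric connectedness of each fiber of $\det$. The fibers carry a transitive action of $\SL_h(\OO_F/\pi^n\OO_F)$, so the claim is equivalent to the transitivity of that subgroup on $\pi_0\bigl(\mathfrak{X}(\pi^n)\otimes\hat{\overline{F}}_0\bigr)$. I would proceed inductively in $n$: by Drinfeld's regularity of $\mathcal{A}(\pi^n)$, the cover $\mathfrak{X}(\pi^{n+1})\to\mathfrak{X}(\pi^n)$ is governed by the explicit coordinate relations $[\pi]_{\Fc^{\univ}}\bigl(X_i^{(n+1)}\bigr)=X_i^{(n)}$, and its Galois group is the kernel of $\GL_h(\OO_F/\pi^{n+1})\to\GL_h(\OO_F/\pi^n)$, which is compatible with the $\SL_h$-filtration. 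This reduces the problem to the base case $n=1$, which can be handled either by restricting to the generic fiber over a chosen point of $\mathfrak{X}(1)$ (where the fiber becomes a totally ramified cover whose components $\SL_h(k)$ visibly permutes transitively) or by appeal to Strauch's or Drinfeld's irreducibility results for $\mathfrak{X}(\pi)$ above a single component. Surjectivity of $\det$ is elementary, as any Drinfeld basis of $\LT[\pi^n]$ arises as the wedge of some Drinfeld basis of a height-$h$ deformation.

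With the bijection in hand, the group action is a tautological bookkeeping exercise. Right multiplication by $g\in\GL_h(\OO_F)$ transforms a Drinfeld basis via formula~\eqref{MX}, whence its wedge is multiplied by the scalar $\det(g)$. An element $b\in\OO_B^\times$ acts on $\Sigma$ and induces multiplication by $\N_{B/F}(b)$ on $\wedge^h\Sigma\simeq\LT\otimes\overline{k}$; since this scalar modifies the framing $\iota$ on the target, it contributes $\N_{B/F}(b)^{-1}$ on the side of bases of $\LT[\pi^n]$. Finally, $\Gal(F_n/F_0)$ scales $\LT[\pi^n]$ via $\chi$ and therefore scales bases by $\chi^{-1}$. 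Composing these three scalars yields the character~\eqref{tripleproductchar}.
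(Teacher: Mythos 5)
The paper does not prove this theorem at all: it is a direct citation of Strauch, Thm.\ 4.4 of \cite{Strauch:ConnComp}, recast in rigid-analytic language, and the parenthetical remark after the statement explains only the translation of bookkeeping between $\pi_0(\Spec(F_n\otimes_{F_0}\C_\pi))$ and $\mathfrak{X}_{\LT}(\pi^n)(F_n)$. So your proposal is by construction a different route. In fact it reverses the logical flow of the paper: the determinant functor $\mu_n$ of \S\ref{determinants} (Thm.~\ref{functorial}) is introduced \emph{after} and explicitly \emph{motivated by} Thm.~\ref{components} (``As noted in the introduction to \cite{Strauch:ConnComp}, Thm.~\ref{components} suggests a determinant functor\dots''), and it is only constructed under the hypothesis $\ch F>0$, whereas Strauch's theorem carries no such restriction. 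You therefore cannot use the paper's $\mu_n$ to re-derive Thm.~\ref{components} in full generality, and even in equal characteristic you would be proving a known input to \S\ref{determinants} from its output.

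The real gap, though, is that the geometric connectedness of the fibers of your proposed $\det$ morphism is exactly the nontrivial content of Strauch's theorem, and your sketch does not actually supply it. In your inductive step you note that the Galois group of $\mathfrak{X}(\pi^{n+1})\to\mathfrak{X}(\pi^n)$ is the congruence kernel and is ``compatible with the $\SL_h$-filtration,'' but that alone does not show that the trace-zero part of that kernel acts transitively on the geometric components above a fixed component: a priori the stabilizer of a component could be a proper subgroup of $\ker(\tr)$, giving strictly more than $q$ components over one, and then you would still need to see that $\ker(\tr)$ permutes the ones lying in a single $\det$-fiber transitively. For the base case you invoke ``Strauch's or Drinfeld's irreducibility results,'' which is circular if the point is to reprove the theorem; your alternative argument via restriction to a single point of $\mathfrak{X}(1)$ does not establish that the component group of the whole space $\mathfrak{X}(\pi)$ has order $q-1$, only that it surjects onto $k^\times$. (In equal characteristic one can get $n=1$ cleanly from Prop.~\ref{tame}, since $\mathcal{A}(\pi)\otimes\OO_{E_1}$ visibly splits into $q-1$ formal power-series rings cut out by $\mu=\zeta\varpi$; but this again uses material that in the paper comes after Thm.~\ref{components}, and it does not by itself complete the inductive step.) Your treatment of the three group actions, once the bijection is granted, is fine. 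If you want a self-contained proof along your lines you must either supply a genuine monodromy argument showing the stabilizer of a component is all of $\SL_h(\OO_F/\pi^n)$, or be explicit that you are citing Strauch for that step, which is what the paper itself does.
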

(In~\cite{Strauch:ConnComp}, $\pi_0(\mathfrak{X}(\pi^n)\otimes\C_{\pi})$ is identified with $\pi_0(\Spec(F_n\otimes_{F_0} \C_\pi))$, where $\C_\pi$ is the completion of a separable closure of $F$.  But this latter $\pi_0$, being the set of $F_0$-linear embeddings of $F_n$ into $\C_\pi$, is the same as the set of bases for $\LT[\pi^n](\C_\pi)$.   Thus Thm.~\ref{components} carries the same content as the theorem cited in~\cite{Strauch:ConnComp}.)

As noted in the introduction to~\cite{Strauch:ConnComp}, Thm.~\ref{components} suggests a determinant functor $\Fc\mapsto \Lambda^h\Fc$ assigning to each deformation $\Fc$ of $\Sigma$ a deformation $\Lambda^h\Fc$ of $\LT\otimes \overline{k}$.  This functor would of course identify the top exterior power of the Tate module $T(\Fc)$ with $T(\Lambda^h \Fc)$.   In this section we provide just such a determinant functor {\em in the case of equal characteristic}, taking advantage of the explicit model of the universal deformation $\Fc^{\univ}$ described in \S\ref{univdefcharp}.  More precisely we prove:
\begin{Theorem}   \label{functorial}  Assume $\ch F>0$.  For each $n\geq 1$ there exists a morphism
\[ \mu_n \from \Fc^{\text{univ}}[\pi^n]\times\cdots\times \Fc^{\text{univ}}[\pi^n] \to\LT[\pi^n]\otimes\mathcal{A} \]
of group schemes over $\mathcal{A}=\hat{\OO}_{F^{\nr}}\ps{u_1,\dots,u_{h-1}}$ which is $\OO_F$-multilinear and alternating, and which satisfies the following properties:
\begin{enumerate}
\item  The maps $\mu_n$ are compatible in the sense that
\[\mu_n([\pi]_{\Fc^{\univ}}(X_1),\dots,[\pi]_{\Fc^{\univ}}(X_h))=\mu_{n-1}(X_1,\dots,X_h)\] for $n\geq 2$.
\item If $X_1,\dots,X_h$ are sections of $\Fc^{\univ}[\pi^n]$ over an $\mathcal{A}$-algebra $R$ which form a Drinfeld level $\pi^n$ structure, then $\mu_n(X_1,\dots,X_h)$ is a Drinfeld level $\pi^n$ structure for $\LT[\pi^n]\otimes R$.
\end{enumerate}
\end{Theorem}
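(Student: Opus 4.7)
The plan is to realise $\mu_n$ as the $\pi^n$-torsion restriction of a single global morphism of formal $\OO_F$-modules $\mu\from (\Fc^{\univ})^h \to \LT\otimes\mathcal{A}$ that is $\OO_F$-multilinear and alternating; property~(1) will then be automatic.

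The construction exploits the explicit positive-characteristic model~\eqref{piuniv}. I would view $\Fc^{\univ}$ as $\Ga$ over $\mathcal{A}$ on which the twisted polynomial $\phi = \pi + u_1\tau + \cdots + u_{h-1}\tau^{h-1} + \tau^h$, with $\tau\alpha = \alpha^q\tau$, realises the $[\pi]$-action. In Anderson's formalism the corresponding motive is $M = \mathcal{A}\{\tau\}$, a left $\mathcal{A}\{\tau\}$-module carrying a right action of $\OO_F$ through $\phi$; under this right action, the elements $1,\tau,\ldots,\tau^{h-1}$ form an $\mathcal{A}\otimes_k \OO_F$-basis of $M$. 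The top exterior power $\wedge^h M$ is then a rank-one motive whose $\tau$-action can be computed directly from the companion matrix of left multiplication by $\tau$ in this basis, and I expect the resulting object to match the motive of the height-one Lubin-Tate module $\LT\otimes\mathcal{A}$. The universal alternating multilinear map $M^h \to \wedge^h M$ then produces, via Anderson's equivalence, the desired $\mu$ as an explicit closed-form polynomial in $X_1,\ldots,X_h$---a Moore-style determinant modified by the $u_i$'s so as to intertwine $[\pi]_{\Fc^{\univ}}$ with $[\pi]_{\LT}$. Multilinearity and alternation are then immediate from the determinantal shape.

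For property~(2)---that Drinfeld level $\pi^n$ structures are sent to Drinfeld level $\pi^n$ structures---I would proceed by a generic-special reduction. Over $\mathcal{A}[\pi^{-1}]$ the $\pi^n$-torsion of both $\Fc^{\univ}$ and $\LT$ is \'etale, and Drinfeld bases coincide with honest $\OO_F$-bases of the Tate modules; the statement there reduces to the classical fact that an exterior-power construction sends a basis to a basis, and can be verified on a single example. One then propagates to the integral ring $\mathcal{A}(\pi^n)$ using Drinfeld's divisibility criterion: a tuple $(Y_1,\ldots,Y_h)$ is a Drinfeld basis precisely when $\prod_a\bigl(X - \phi(a_1)Y_1 +_{\Fc^{\univ}} \cdots +_{\Fc^{\univ}} \phi(a_h)Y_h\bigr)$ divides $[\pi^n]_{\Fc^{\univ}}(X)$. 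The multilinear identity for $\mu$, combined with the regularity of $\mathcal{A}(\pi^n)$ and the fact that the $X_i^{(n)}$ form a system of parameters, transports the required divisibility to the $\LT$ side.

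I expect the main obstacle to be the explicit identification of $\wedge^h M$ with the motive of $\LT\otimes\mathcal{A}$: one must describe the induced $\tau$-action on the one-dimensional top wedge and recognise the resulting rank-one object as Lubin-Tate of height one, a subtle companion-matrix calculation that presumably carries the bulk of the section. Property~(2) is a secondary difficulty because Drinfeld bases are characterised by a divisibility condition rather than by being honest module bases, so one cannot simply invoke na\"ive multilinearity but must instead argue via Drinfeld's criterion and the regular-local structure of $\mathcal{A}(\pi^n)$.
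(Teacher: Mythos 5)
Your plan takes a genuinely different and more conceptual route than the paper, which proceeds by bare-hands computation: the paper sets $\mu_1 = \mu(X_1,\dots,X_h) = \det(X_i^{q^j})$ (the Moore determinant) and then, for $n > 1$, defines
\[
\mu_n(X_1,\dots,X_h) = \sum_{(a_1,\dots,a_h)} \mu\bigl([\pi^{a_1}]_u(X_1),\dots,[\pi^{a_h}]_u(X_h)\bigr)
\]
with the sum over tuples $0 \le a_i \le n-1$ with $\sum a_i = (h-1)(n-1)$; compatibility and $\OO_F$-linearity are proved by a direct combinatorial manipulation of the defining sum (Prop.~\ref{mu_n} and the following paragraph), and property~(2) is reduced to $n=1$ via compatibility and checked by an explicit coefficient comparison against the factorization of $[\pi]_{\Fc^{\univ}}(T)$ (Lemma~\ref{level1det}). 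Your motive-theoretic framing is attractive and is probably the ``right'' conceptual explanation for why a Moore-like determinant intertwines $[\pi]_{\Fc^{\univ}}$ with $[\pi]_{\LT}$, but as written it has two gaps that would block it.

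First, the opening premise --- that $\mu_n$ is the $\pi^n$-torsion restriction of a single global morphism $\mu\from(\Fc^{\univ})^h \to \LT\otimes\mathcal{A}$ --- is false, and this is not a cosmetic issue. Inspecting the paper's formula shows that the polynomial defining $\mu_n$ genuinely depends on $n$ (for instance $\mu_2$ vanishes identically on $\pi$-torsion, whereas $\mu_1$ does not), and the compatibility in part~(1) is a tower compatibility under $[\pi]$ rather than an extension/restriction statement. This is the same phenomenon as for the Weil pairing on an elliptic curve: it exists on each torsion level, not as a pairing of the whole curve. So one cannot make property~(1) ``automatic'' by globalizing; the family $\{\mu_n\}$ must be constructed level by level and the compatibility proved, which is exactly the content of Prop.~\ref{mu_n}. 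Second, Anderson's module--motive correspondence is contravariant and linear; it does not by itself convert the universal alternating map $M^h \to \wedge^h M$ (a $\sigma$-semilinear object) into an alternating map of $\OO_F$-module schemes on the torsion points --- that step is precisely where the Moore determinant and its twisted-polynomial identities must be exhibited, which you yourself flag as ``the bulk of the section.'' Finally, for property~(2), your plan to check it generically over $\mathcal{A}[\pi^{-1}]$ and propagate via Drinfeld's divisibility criterion is harder than necessary and would need care over an arbitrary $\mathcal{A}$-algebra $R$ (not a domain in general); the paper's reduction to $n=1$ using part~(1), together with the observation that for a height-one module $y$ is a Drinfeld $\pi^n$-basis iff $[\pi^{n-1}](y)$ is a Drinfeld $\pi$-basis, is both shorter and fully integral.
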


\begin{rmk}  It is also possible to show that $\mu_n$ transforms the action of $\GL_h(\OO_F)\times \OO_B^\times \times\Gal(F_n/\hat{F}^{\nr})$ on $\Fc^{\text{univ}}[\pi^n]\times\cdots\times \Fc^{\text{univ}}[\pi^n]$ into the character defined in Eq.~\eqref{tripleproductchar}, but we will not be needing this.
\end{rmk}

The proof of Thm.~\ref{functorial} will occupy \S\ref{detlevelpi} and \S\ref{dethigherlevel}.  Up to isomorphism there is only one formal $\OO_F$-module $\LT$ whose reduction has height one, so we are free to choose a model for it.
For the remainder of the paper, $\LT$ will denote the formal $\OO_F$-module over $\hat{\OO}_{F^{\nr}}$ with operations
\begin{align*}
X+_{\LT} Y &= X+Y \\
[\alpha]_{\LT}(X) &= \alpha X,\; \alpha\in k\\
[\pi]_{\LT}(X) &= \pi X + (-1)^{h-1} X^q.
\end{align*}

\subsection{\texorpdfstring{Determinants of level $\pi$ structures}{Determinants of level pi structures}}
\label{detlevelpi}

First define the polynomial in $h$ variables
\[ \mu(X_1,\dots,X_h) = \det\left(X_i^{q^j}\right)\in k[X_1,\dots,X_h]\]
(the exponent $j$ ranges from 0 to $h-1$).  Then $\mu$ is a $k$-linear alternating form, known as the Moore determinant, cf.~\cite{Goss}, Ch. 1.  We will need two simple identities involving $\mu$.  The first is
\begin{equation}
\label{prodmu}
\prod_{0\neq a\in k^h}(a_1X_1+\dots+a_hX_h) = (-1)^h \mu(X_1,\dots,X_h)^{q-1},
\end{equation}
in which the product runs over nonzero vectors $a=(a_1,\dots,a_h)$ in $k^h$.  Second,
there is the identity
\begin{equation}
\label{piLT}
 [\pi]_{\LT}(\mu(X_1,\dots,X_n)) =  \det\left([\pi]_{\Fc^{\univ}}(X_i)\biggm\vert X_i^q \biggm\vert \cdots \biggm\vert X_i^{q^{h-1}}\right)_{1\leq i\leq h},
\end{equation}
valid in $\mathcal{A}[X_1,\dots,X_n]$.  This is easily seen by expanding the first column of the matrix according to Eq.~\eqref{piuniv}.
\begin{lemma}
\label{level1det}  If $X_1,\dots,X_h$ are sections of $\Fc^{\univ}[\pi]$, then $\mu(X_1,\dots,X_h)$ is a section of $\LT[\pi]$.  If the $X_i$ form a Drinfeld basis for $\Fc^{\univ}[\pi]$, then $\mu(X_1,\dots,X_h)$ constitutes a Drinfeld basis for $\LT[\pi]$.
\end{lemma}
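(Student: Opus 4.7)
The plan is to deduce the lemma essentially directly from the two preparatory identities \eqref{prodmu} and \eqref{piLT}.

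The first assertion is immediate from \eqref{piLT}: if each $X_i$ is a section of $\Fc^{\univ}[\pi]$, then $[\pi]_{\Fc^{\univ}}(X_i) = 0$, so the entire first column of the matrix on the right-hand side vanishes, forcing $[\pi]_{\LT}(\mu(X_1,\dots,X_h)) = 0$.

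For the Drinfeld basis assertion, the strategy is to first reduce the Drinfeld condition for $\LT[\pi]$ to a single polynomial identity, and then to derive that identity from \eqref{prodmu} combined with the Drinfeld condition upstairs. Using the standard identity $\prod_{\alpha\in k}(T-\alpha) = T^q - T$, one gets $\prod_{\alpha\in k}(T - \alpha Y) = T^q - Y^{q-1}T$, while the chosen model for $\LT$ gives $[\pi]_{\LT}(T) = (-1)^{h-1}T^q + \pi T$. Both are polynomials in $T$ of degree $q$ with the first monic, so divisibility of the first into the second is equivalent to the equality $[\pi]_{\LT}(T) = (-1)^{h-1}(T^q - Y^{q-1}T)$, which collapses to the single condition $Y^{q-1} = (-1)^h \pi$.

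It therefore suffices to verify $\mu(X_1,\dots,X_h)^{q-1} = (-1)^h\pi$ whenever $(X_1,\dots,X_h)$ is a Drinfeld basis of $\Fc^{\univ}[\pi]$. By definition and by comparison of degrees and leading coefficients (both are monic of degree $q^h$ in $T$, using \eqref{piuniv}), one has the equality $\prod_{a\in k^h}(T - (a_1X_1+\cdots+a_hX_h)) = [\pi]_{\Fc^{\univ}}(T)$ in $\mathcal{A}[T]$. Pulling out the $a=0$ factor $T$ and specializing to $T=0$ yields $\prod_{0\neq a\in k^h}(a_1X_1+\cdots+a_hX_h) = \pi$ (the sign $(-1)^{q^h-1}$ is trivial: automatic in characteristic $2$, and $q^h-1$ is even for odd $p$). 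Combining with \eqref{prodmu} gives $(-1)^h\mu^{q-1} = \pi$, i.e.\ $\mu^{q-1} = (-1)^h\pi$, as required.

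The only technical point deserving care is the divisibility-implies-equality step over a possibly non-integral base ring, but this is harmless here because the relevant polynomials in $T$ are monic of equal degree, so their quotient is forced to be a unit determined by leading coefficients. No delicate estimates or compatibility arguments arise; the substantive content of the lemma is entirely packaged in the two identities \eqref{prodmu} and \eqref{piLT}, which are themselves elementary algebraic manipulations of the Moore determinant and the explicit model \eqref{piuniv}.
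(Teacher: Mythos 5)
Your proof is correct and follows essentially the same path as the paper's: derive $\mu(X_1,\dots,X_h)^{q-1} = (-1)^h\pi$ from the product formula for $[\pi]_{\Fc^{\univ}}(T)$ together with \eqref{prodmu}, then check that this forces $\prod_{\alpha\in k}(T - \alpha\mu)$ to equal $(-1)^{h-1}[\pi]_{\LT}(T)$. The only cosmetic differences are that you extract the $T$-coefficient by removing the $a=0$ factor and setting $T=0$ (the paper just says ``equate coefficients of $T$,'' which is the same computation), and you spell out the monic-degree-matching argument that makes divisibility into equality; both are fine, and the latter is a worthwhile clarification. One small slip: the polynomial identity $\prod_{a\in k^h}(T-(a_1X_1+\cdots+a_hX_h))=[\pi]_{\Fc^{\univ}}(T)$ holds in $R[T]$ for the $\mathcal{A}$-algebra $R$ carrying the Drinfeld basis, not in $\mathcal{A}[T]$.
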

\begin{proof}  Suppose $X_1,\dots,X_h$ are sections of $\Fc^{\univ}[\pi]$ over an $\mathcal{A}$-algebra $R$.   Then the claim that $\mu(X_1,\dots,X_h)$ is annihilated by $[\pi]_{\LT}$ follows from Eq.~\eqref{piLT}.  Now assume that $X_1,\dots,X_h$ is a Drinfeld basis for $\Fc^{\univ}[\pi]$.  This means that
\[\prod_{a\in k^h} \left(T-(a_1X_1+\dots+a_hX_h)\right)\text{ divides }[\pi]_{\Fc^{\univ}}(T)\]
in $R\ps{T}$, hence in $R[T]$.  Since $[\pi]_{\Fc^{\univ}}(T)$ is monic, these polynomials are equal:
\begin{equation}
\label{prodpi}
\prod_{a\in k^h} \left(T-(a_1X_1+\dots+a_hX_h)\right)=
\pi T+u_1T^q+\dots+
u_{h-1}T^{q^{h-1}}+T^{q^h}
\end{equation}
 Equating  coefficients of $T$ and using Eq.~\eqref{prodmu} shows that
\[ \mu(X_1,\dots,X_h)^{q-1} = (-1)^h \pi. \]
On the other hand,
\[\prod_{a\in k} (T-a\mu(X_1,\dots,X_h))=T^q-\mu(X_1,\dots,X_h)^{q-1}T=(-1)^{h-1}[\pi]_{\LT}(T),\]
which shows that $\mu(X_1,\dots,X_h)$ forms a Drinfeld basis for $\LT[\pi]\otimes R$.
\end{proof}

\subsection{\texorpdfstring{Good reduction of an affinoid in $\mathfrak{X}(\pi)$}{Good reduction of an affinoid in X(pi)}}
In this interlude we find an affinoid in $\mathfrak{X}(\pi)$ whose reduction is the Deligne-Lusztig variety for $\GL_h(k)$.  This is nothing new in light of~\cite{yoshida}, Prop. 6.15, but it will give a flavor of the corresponding calculation for $\mathfrak{X}(\pi^2)$.

\begin{prop} \label{tame} There is an isomorphism of local $\hat{\OO}_{F^{\nr}}$-algebras
\[
\frac{\hat{\OO}_{F^{\nr}}\ps{X_1,\dots,X_h}}{\mu(X_1,\dots,X_h)^{q-1}-(-1)^h\pi}
\tilde{\longrightarrow}
\mathcal{A}(\pi)
\]
carrying $X_i$ onto $X_i^{(1)}$.
\end{prop}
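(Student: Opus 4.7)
The plan is to exhibit the map, identify the source with a power series ring via a change of variables, and conclude from a Krull dimension comparison.

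The identity $\mu(X_1^{(1)},\dots,X_h^{(1)})^{q-1}=(-1)^h\pi$ in $\mathcal{A}(\pi)$ was already derived inside the proof of Lemma~\ref{level1det}, by equating coefficients of $T$ in equation~\eqref{prodpi} and using~\eqref{prodmu}. Hence the natural continuous $\hat{\OO}_{F^{\nr}}$-algebra map $\hat{\OO}_{F^{\nr}}\ps{X_1,\dots,X_h}\to\mathcal{A}(\pi)$ sending $X_i\mapsto X_i^{(1)}$ kills $\mu^{q-1}-(-1)^h\pi$ and so induces a local homomorphism $\varphi\from R\to\mathcal{A}(\pi)$, where $R$ denotes the source ring in the statement. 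Surjectivity of $\varphi$ is automatic: by the Drinfeld representability result recalled earlier, $X_1^{(1)},\dots,X_h^{(1)}$ generate the maximal ideal of the complete local ring $\mathcal{A}(\pi)$, so they topologically generate it over the coefficient subring $\hat{\OO}_{F^{\nr}}$.

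Next I would identify the source. Since $\ch F=p$, we have $\hat{\OO}_{F^{\nr}}=\overline{k}\ps{\pi}$, so $\hat{\OO}_{F^{\nr}}\ps{X_1,\dots,X_h}=\overline{k}\ps{\pi,X_1,\dots,X_h}$, an $(h+1)$-dimensional regular local ring. Set $\pi':=\pi-(-1)^h\mu(X_1,\dots,X_h)^{q-1}$; since $\mu^{q-1}\in(X_1,\dots,X_h)$, the images of $\pi',X_1,\dots,X_h$ form a basis of $\mathfrak{m}/\mathfrak{m}^2$, so this list is a regular system of parameters. This change of variable identifies $\overline{k}\ps{\pi,X_1,\dots,X_h}$ with $\overline{k}\ps{\pi',X_1,\dots,X_h}$, and quotienting by $\pi'$ shows that $R$ is the power series ring $\overline{k}\ps{X_1,\dots,X_h}$, in particular a regular local domain of Krull dimension $h$.

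To conclude, note that by the Drinfeld result $\mathcal{A}(\pi)$ is regular local of the same dimension $h$ (relative dimension $h-1$ over the DVR $\hat{\OO}_{F^{\nr}}$). A surjection from a Noetherian local domain of dimension $h$ to a Noetherian local ring of dimension $h$ is automatically injective: any nonzero ideal of the catenary domain $R$ would drop Krull dimension below $h$. Hence $\varphi$ is the claimed isomorphism. The only step that might look nontrivial is the change of variables, and it reduces to the observation that $\mu^{q-1}$ lies in the ideal $(X_1,\dots,X_h)$, so $\pi'$ contributes exactly the class of $\pi$ to $\mathfrak{m}/\mathfrak{m}^2$.
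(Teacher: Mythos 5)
Your proof is correct, and it substitutes a cleaner commutative-algebra argument for the Galois-theoretic degree count used in the paper. After establishing (as you and the paper both do, via Lemma~\ref{level1det}) that the natural map $\varphi\from R\to\mathcal{A}(\pi)$ exists and is surjective because the $X_i^{(1)}$ are regular parameters, the paper proceeds by giving $\mathcal{A}(\pi)'$ the structure of an $\mathcal{A}$-algebra via the coefficients of the product $\prod_{a}(T-\sum a_iX_i)$, then arguing that both source and target are Galois over $\mathcal{A}$ with group $\GL_h(k)$ and that $\varphi$ is $\mathcal{A}$-linear, so a surjection of finite extensions of the same degree must be an isomorphism. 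You instead perform the change of variables $\pi'=\pi-(-1)^h\mu^{q-1}$, which in equal characteristic exhibits $R\isom\overline{k}\ps{X_1,\dots,X_h}$ directly as a regular local domain of Krull dimension $h$, then compare with $\dim\mathcal{A}(\pi)=h$ (from Drinfeld's representability result) to force $\ker\varphi=(0)$. Your route avoids having to verify that the ad hoc $\mathcal{A}$-algebra structure on $\mathcal{A}(\pi)'$ really gives a Galois extension and that $\varphi$ respects it; the cost is that the paper's argument incidentally reveals the Galois structure of the extension, which is used elsewhere. One small point worth flagging: catenarity of $R$ is not actually needed for the final step, since for any Noetherian local domain a nonzero ideal strictly lowers dimension (the inequality $\dim R/\mathfrak{p}+\mathrm{ht}(\mathfrak{p})\leq\dim R$ holds without catenarity); but $R$ is in any case regular, hence universally catenary, so your appeal to it is harmless.
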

\begin{proof}
Let $\mathcal{A}(\pi)'=\hat{\OO}_{F^{\nr}}\ps{X_1,\dots,X_h}/(\mu(X_1,\dots,X_h)^{q-1}-(-1)^h\pi)$.  By Lemma~\ref{level1det} there is unique homomorphism $\mathcal{A}(\pi)'\to\mathcal{A}(\pi)$ of $\hat{\OO}_{F^{\nr}}$-algebras carrying $X_i$ onto $X_i^{(1)}$.   Since the $X_i^{(1)}$ form a system of regular local parameters of $\mathcal{A}(\pi)$, this homomorphism is surjective.   The algebra $\mathcal{A}(\pi)$ is a Galois extension of $\mathcal{A}$ with group $\GL_h(k)$.  But we can also furnish $\mathcal{A}(\pi)'$ with the structure of an $\mathcal{A}$-algebra, by identifying $u_i\in\mathcal{A}$ with the coefficient of $T^{q^i}$ on the left-hand side of Eq.~\eqref{prodpi}.  Then $\mathcal{A}(\pi)'$ becomes a Galois extension of $\mathcal{A}$ with group $\GL_h(k)$ as well, and the homomorphism $\mathcal{A}(\pi)'\to\mathcal{A}(\pi)$ respects the $\mathcal{A}$-algebra structure.  We conclude that $\mathcal{A}(\pi)'\to\mathcal{A}(\pi)$ is an isomorphism.
\end{proof}

Now let $E/F$ be the unramified extension of degree $h$, and let $E_1/E^{\nr}$ be the extension obtained by adjoining a root $\varpi$ of $X^{q^h-1}-(-1)^h\pi$.   Then $E_1/E^{\nr}$ is totally tamely ramified of degree $q^h-1$.
Let $\mathfrak{X}(1)^{\ts}\subset\mathfrak{X}(1)\otimes E_1$ be the affinoid polydisc defined by the conditions
\[ v(u_i)\geq v(\varpi^{q^h - q^i})=\frac{q^h-q^i}{q^h-1} \]
The notation is borrowed from~\cite{coleman:Xp3}:  This is exactly the domain on which $\Fc^{\univ}[\pi]$ admits no canonical subgroups;  {\em i.e.} where $\Fc^{\univ}$ is ``too supersingular".   Whenever $\Fc$ is a deformation of $\Sigma$ lying in $\mathfrak{X}(1)^{\ts}$, all nonzero roots of $\Fc[\pi]$ have valuation equal to $v(\varpi)$.  By applying the change of variables $X_i=\varpi V_i$ to Prop.~\ref{tame} we find:
\begin{Theorem}
The preimage of $\mathfrak{X}(1)^{\ts}$ in $\mathfrak{X}(\pi)\otimes E_1$ has reduction isomorphic to the smooth affine hypersurface over $\overline{k}$ with equation $\mu(V_1,\dots,V_h)^{q-1}=1$.
\end{Theorem}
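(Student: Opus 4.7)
The plan is to apply Prop.~\ref{tame} directly and rescale coordinates by $\varpi$ so that the defining relation acquires a unit constant term after reduction. Prop.~\ref{tame} presents $\mathcal{A}(\pi)$ as $\hat{\OO}_{F^{\nr}}\ps{X_1,\dots,X_h}/\bigl(\mu(X)^{q-1} - (-1)^h\pi\bigr)$, and the relation $\varpi^{q^h-1} = (-1)^h\pi$ makes the substitution $V_i := X_i^{(1)}/\varpi$ legal over $\OO_{E_1}$. Because the $j$-th column of the Moore determinant is $q^j$-homogeneous,
\[
\mu(\varpi V_1,\dots,\varpi V_h) \;=\; \varpi^{1+q+\cdots+q^{h-1}}\mu(V_1,\dots,V_h) \;=\; \varpi^{(q^h-1)/(q-1)}\mu(V_1,\dots,V_h),
\]
so raising to the $(q-1)$-st power and cancelling $\varpi^{q^h-1} = (-1)^h\pi$ converts the Prop.~\ref{tame} relation into $\mu(V)^{q-1} = 1$.

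The second step is to identify the preimage of $\mathfrak{X}(1)^{\ts}$ with the integrality locus $\{v(V_i)\geq 0\}$. One direction is handed to us: the excerpt notes that on $\mathfrak{X}(1)^{\ts}$ every nonzero root of $[\pi]_{\Fc^{\univ}}$ has valuation exactly $v(\varpi)$, so $v(X_i^{(1)}) = v(\varpi)$ and $v(V_i) = 0$. For the converse, suppose each $V_i$ is integral; then each linear combination $L_a = a_1 X_1^{(1)} + \cdots + a_h X_h^{(1)}$ ($a\in k^h$) satisfies $v(L_a) \geq v(\varpi)$, and Eq.~\eqref{prodpi} exhibits $(-1)^{q^h-q^i}u_i$ as the degree $q^h-q^i$ elementary symmetric function of the nonzero $L_a$'s, so $v(u_i)\geq (q^h-q^i)v(\varpi) = v(\varpi^{q^h-q^i})$. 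Hence the preimage of $\mathfrak{X}(1)^{\ts}$ is the affinoid $\Spm E_1\ta{V_1,\dots,V_h}/(\mu(V)^{q-1}-1)$, whose reduction is the affine scheme cut out by $\mu(V)^{q-1}=1$ over $\overline{k}$.

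For smoothness I would compute the partials of $\mu$ directly. In characteristic $p$ only the first ($V_i$-linear) column of the Moore matrix survives differentiation, and expanding the cofactor and factoring out Frobenius gives
\[
\frac{\partial \mu}{\partial V_i} \;=\; (-1)^{i+1}\,\mu(V_1,\dots,\widehat{V_i},\dots,V_h)^{q}.
\]
At any geometric point of the hypersurface, $\mu(V) \neq 0$ forces $V_1,\dots,V_h$ to be $k$-linearly independent, hence so is any subset of size $h-1$, so each $\partial \mu/\partial V_i$ is nonzero; since $q-1 \equiv -1 \pmod p$, the partials of $\mu^{q-1}-1$ are likewise nonzero, proving smoothness.

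The main obstacle is the converse direction in the middle step: matching the supersingularity bound $v(u_i) \geq (q^h-q^i)/(q^h-1)$ to the uniform bound $v(X_i^{(1)}) \geq v(\varpi)$ requires the Newton-polygon/symmetric-function bookkeeping sketched above. Everything else is a direct consequence of Prop.~\ref{tame} and the homogeneity of the Moore determinant.
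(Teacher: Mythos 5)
Your proposal is correct and follows essentially the same route as the paper, which simply applies the change of variables $X_i = \varpi V_i$ to Prop.~\ref{tame}. You have filled in the details the paper leaves implicit: the homogeneity of the Moore determinant turning $\mu(X)^{q-1}=(-1)^h\pi$ into $\mu(V)^{q-1}=1$, the Newton-polygon/elementary-symmetric-function argument identifying the preimage of $\mathfrak{X}(1)^{\ts}$ with the integrality locus $\{v(V_i)\geq 0\}$, and the computation $\partial\mu/\partial V_i=(-1)^{i+1}\mu(V_1,\dots,\widehat{V_i},\dots,V_h)^q$ establishing smoothness.
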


\subsection{Determinants of structures of higher level.}
\label{dethigherlevel}

Now let $n\geq 1$, and suppose $X_1,\dots,X_h$ are sections of $\Fc^{\univ}[\pi^n]$.  We write $[\pi^a]_u(X)$ as an abbreviation for $[\pi^a]_{\Fc^{\univ}}(X)$.  We define the form $\mu_n$ by
\[ \mu_n(X_1,\dots,X_h)=\sum_{(a_1,\dots,a_h)} \mu\left([\pi^{a_1}]_u(X_1),\dots,[\pi^{a_h}]_u(X_h)\right), \]
where the sum runs over tuples of integers $(a_1,\dots,a_h)$ with $0\leq a_i\leq n-1$ whose sum is  $(h-1)(n-1)$.  It is clear that $\mu_n$ is $k$-multilinear and alternating in $X_1,\dots,X_h$.   Before proving that $\mu_n$ is $\OO_F$-linear, we will show:

\begin{prop} \label{mu_n} For sections $X_1,\dots,X_h$ of $\Fc^{\univ}[\pi^n]$, we have \[ [\pi]_{\LT}(\mu_n(X_1,\dots,X_h)) = \mu_{n-1}([\pi]_u(X_1),\dots,[\pi]_u(X_h)). \]  In particular $\mu_n(X_1,\dots,X_h)$ is a section of the group scheme $\LT[\pi^n]$.
\end{prop}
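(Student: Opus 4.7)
The plan is to apply the determinantal identity \eqref{piLT} termwise to $\mu_n(X_1,\dots,X_h)$, exploit the torsion hypothesis $[\pi^n]_u X_i = 0$ to kill certain entries, and then rearrange a double sum via a Laplace expansion. Since $[\pi]_{\LT}(Z)=\pi Z+(-1)^{h-1}Z^q$ is additive in characteristic $p$, it distributes across the defining sum for $\mu_n$, and \eqref{piLT} rewrites each summand as a single determinant, giving
\[
[\pi]_{\LT}(\mu_n(X_1,\dots,X_h)) = \sum_{\vec a} \det\left([\pi^{a_i+1}]_u X_i \bigm\vert ([\pi^{a_i}]_u X_i)^q \bigm\vert \cdots \bigm\vert ([\pi^{a_i}]_u X_i)^{q^{h-1}}\right),
\]
where $\vec a = (a_1,\dots,a_h)$ runs over the same index set as in the definition of $\mu_n$.

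Next I would Laplace-expand each determinant along column $0$. The torsion relations force $[\pi^{a_i+1}]_u X_i = 0$ whenever $a_i = n-1$, so only rows $i_0$ with $a_{i_0}\le n-2$ contribute. The minor obtained by deleting row $i_0$ and column $0$ has $(i,j)$-entry $([\pi^{a_i}]_u X_i)^{q^j}$ for $i\neq i_0$ and $j=1,\dots,h-1$; this is the entrywise $q$-th Frobenius of the $(h-1)\times(h-1)$ Moore matrix of $\{[\pi^{a_i}]_u X_i\}_{i\neq i_0}$, and since Frobenius commutes with the determinant in characteristic $p$, the minor's determinant equals $\mu(\{[\pi^{a_i}]_u X_i\}_{i\neq i_0})^q$.

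Finally I would reindex by setting $b_{i_0}=a_{i_0}+1$ and $b_i=a_i$ for $i\neq i_0$, so that $\sum b_i = (h-1)(n-1)+1$, $1\le b_{i_0}\le n-1$, and $0\le b_i\le n-1$ for $i\neq i_0$. The genuinely combinatorial step---really the only subtle point---is to observe that these constraints actually force $b_i\ge 1$ for \emph{every} $i$: if some $b_j = 0$ with $j\neq i_0$, then the remaining $h-1$ values would have to sum to $(h-1)(n-1)+1$ while each is bounded by $n-1$, which is impossible. Once this is in hand, swapping the order of summation and recognizing the inner sum as the first-column Laplace expansion of $\mu([\pi^{b_1}]_u X_1,\dots,[\pi^{b_h}]_u X_h)$ delivers the identity. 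The concluding claim that $\mu_n(X_1,\dots,X_h)$ is a section of $\LT[\pi^n]$ follows by induction on $n$, with Lemma \ref{level1det} as the base case.
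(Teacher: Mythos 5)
Your proof is correct and follows essentially the same route as the paper: apply Eq.~\eqref{piLT} termwise, expand the resulting determinant along the first column, use the torsion hypothesis to kill the rows with $a_i=n-1$, verify a combinatorial constraint to justify a shift in the index set, and reassemble the result as a sum of Moore determinants equal to $\mu_{n-1}([\pi]_u X_1,\dots,[\pi]_u X_h)$. The only differences are presentational: the paper expands the determinant as a sum over $S_h$ rather than a Laplace cofactor expansion, and it decrements the $h-1$ non-pivot entries (paper's $b_j=a_j-1$ for $j\ne\sigma(1)$) where you increment the single pivot entry ($b_{i_0}=a_{i_0}+1$), so your index set is the paper's shifted by one; the key positivity observation you flag as the subtle point is the same inequality the paper proves, phrased in the shifted coordinates.
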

\begin{proof}
Let $a=(a_1,\dots,a_h)$ be a tuple of nonnegative integers.  Write $[\pi^a](X)$ for the tuple $([\pi^{a_1}]_{u}(X_1),\dots,[\pi^{a_h}]_{u}(X_h))$.   Applying Eq.~\eqref{piLT} we find
\begin{align*}
[\pi]_{\LT}(\mu \left([\pi^a](X)\right)) &= \det\left([\pi^{a_i+1}]_u(X_i) \biggm\vert [\pi^{a_i}]_u(X_i)^q \biggm\vert \cdots\biggm\vert [\pi^{a_i}]_u(X_i)^{q^{h-1}} \right) \\
&=\sum_{\sigma\in S_h} \sgn(\sigma) [\pi^{a_{\sigma(1)}+1}]_u\left(X_{\sigma(1)}\right) \prod_{j=1}^{h-1} [\pi^{a_{\sigma(j+1)}}]_u\left(X_{\sigma(j+1)}\right)^{q^j}
\end{align*}
Now assume the $X_i$ are sections of $\Fc^{\text{univ}}[\pi^n]$:  this means that the terms in the sum with $a_{\sigma(1)}=n-1$ vanish.   The expression $[\pi]_{\LT}(\mu_n(X_1,\dots,X_n))$ is thus a sum over pairs $(a,\sigma)$, where $\sigma\in S_h$ is a permutation and $a=(a_1,\dots,a_h)$ is a tuple of integers satisfying the conditions
\begin{enumerate}
\item $0\leq a_i\leq n-1$
\item $a_{\sigma(1)}<n-1$
\item $\sum_i a_i=(n-1)(h-1)$
\end{enumerate}
Let $b=(b_1,\dots,b_h)$ be the tuple defined by
\[ b_j = \begin{cases} a_j,& j=\sigma(1)\\ a_j - 1,& j\neq \sigma(1)   \end{cases}  \]
Note that each $b_i$ is nonnegative:  If $a_j=0$ for some $j\neq\sigma(1)$, the condition $\sum_ia_i=(n-1)(h-1)$ forces $a_k=n-1$ for all $k\neq j$, which implies that $a_{\sigma(1)}=n-1$, contradicting condition (ii) above.   As $(a,\sigma)$ runs over all pairs of tuples and permutations satisfying (1)--(3), the pair $(b,\sigma)$ runs over all pairs of tuples and permutations satisfying $0\leq b_i\leq n-2$ and $\sum_i b_i=(n-1)(h-1)-(h-1)=(n-2)(h-1)$.  We find

\begin{align*}
[\pi]_{\LT}\left(\mu_n(X_1,\dots,X_h)\right)&=
\sum_{(b,\sigma)} \sgn(\sigma)\prod_{j=1}^{h-1} [\pi^{b_{\sigma(j)+1}}]_u\left(X_{\sigma(j)}\right)^{q^j} \\
&=\sum_b \mu([\pi^{b_1+1}]_u(X_1),\dots,[\pi^{b_h+1}]_u(X_h)) \\
&=\mu_{n-1}\left([\pi]_u(X_1),\dots,[\pi]_u(X_h)\right)
\end{align*}
as required.
\end{proof}

Now we can establish the $\OO_F$-linearity of $\mu_n$.  For this it suffices to show that
\[\mu_n([\pi]_u(X_1),X_2,\dots,X_{h-1})=[\pi]_{\LT}(\mu_n(X_1,\dots,X_h)).\]
We have
\[
\mu_n([\pi]_u(X_1),X_2,\dots,X_{h-1}) = \sum_a \mu([\pi^a](X)),
\]
where $a=(a_1,\dots,a_{h-1})$ runs over tuples satisfying $1\leq a_1\leq n-1$, $0\leq a_i\leq n-1$ for $i>1$, and $\sum_i a_i=(h-1)(n-1)+1$.  But these conditions force $a_i\geq 1$ for $i=1,\dots,h$.  Write $a_i=b_i+1$, so that $0\leq b_i\leq n-2$ and $\sum_i b_i=(h-1)(n-1)$.  Then
\begin{align*}
\mu_n([\pi]_u(X_1),X_2,\dots,X_{h-1})&=
\sum_b \mu([\pi^{b_1+1}]_u(X_1),\dots,[\pi^{b_h+1}]_u(X_h)) \\
&= \mu_{n-1}([\pi]_u(X_1),\dots,[\pi]_u(X_h)) \\
&=[\pi]_{\LT}(\mu_n(X_1,\dots,X_h))
\end{align*}
by Prop.~\ref{mu_n}.

We have established part (1) of Thm.~\ref{functorial}.  Part (1) allows us to reduce part (2) to the case of $n=1$, which has already been treated in Prop.~\ref{level1det}.

Recall that $X_1^{(n)},\dots,X_h^{(n)}$ are the canonical coordinates on $\mathfrak{X}(\pi^n)$.   Thm.~\ref{functorial} shows that the function $\Delta^{(n)}=\mu_n(X_1^{(n)},\dots,X_h^{(n)})$ is a nonzero root of $[\pi^n]_{\LT}(T)$.   The following simple lemma will be useful in the next section.

\begin{lemma}
\label{trM}
Let $M\in M_h(\OO_F/\pi^n\OO_F)$ be a matrix.  Then
\[ \mu_n(M(X_1^{(n)}),\dots,X_h^{(n)})+\dots+\mu_n(X_1^{(n)},\dots,M(X_h^{(n)}))
= [\tr M]_{\LT}(\Delta^{(n)}). \]
\end{lemma}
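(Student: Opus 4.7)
The plan is to expand the action of $M$ on a single coordinate, exploit multilinearity and the alternating property of $\mu_n$ to collapse the double sum, and then recombine diagonal entries using the fact that $\LT$ is a formal $\OO_F$-module.

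More concretely, write $M=(a_{ij})$. By Eq.~\eqref{MX},
\[ M(X_i^{(n)}) = [a_{i1}]_{u}\bigl(X_1^{(n)}\bigr) +_{\Fc^{\univ}} \cdots +_{\Fc^{\univ}} [a_{ih}]_{u}\bigl(X_h^{(n)}\bigr). \]
Since Thm.~\ref{functorial} establishes that $\mu_n$ is $\OO_F$-multilinear as a map into $\LT[\pi^n]$, inserting this into the $i$-th slot of $\mu_n$ expands as a sum over $j$:
\[ \mu_n\bigl(X_1^{(n)},\dots,M(X_i^{(n)}),\dots,X_h^{(n)}\bigr) = \sum_{j=1}^{h} [a_{ij}]_{\LT}\bigl(\mu_n(X_1^{(n)},\dots,X_j^{(n)},\dots,X_h^{(n)})\bigr), \]
where in the $j$-th summand the $i$-th argument has been replaced by $X_j^{(n)}$ and the remaining arguments are unchanged.

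Now invoke the alternating property. For $j\neq i$ the tuple on the right has $X_j^{(n)}$ appearing in both the $i$-th and $j$-th slots, so the inner $\mu_n$ vanishes. Only the diagonal term $j=i$ survives, and it contributes
\[ [a_{ii}]_{\LT}\bigl(\mu_n(X_1^{(n)},\dots,X_h^{(n)})\bigr) = [a_{ii}]_{\LT}\bigl(\Delta^{(n)}\bigr). \]
Summing over $i$ and using that $\LT$ is a formal $\OO_F$-module (so $[a]_{\LT}(T) +_{\LT} [b]_{\LT}(T) = [a+b]_{\LT}(T)$) gives
\[ \sum_{i=1}^{h} \mu_n\bigl(X_1^{(n)},\dots,M(X_i^{(n)}),\dots,X_h^{(n)}\bigr) = \Bigl[\sum_{i=1}^{h} a_{ii}\Bigr]_{\LT}\bigl(\Delta^{(n)}\bigr) = [\tr M]_{\LT}\bigl(\Delta^{(n)}\bigr). \]

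No step here is really an obstacle; the argument is just multilinear algebra, and the only things being used are the properties of $\mu_n$ already proved in Thm.~\ref{functorial} together with the explicit formula~\eqref{MX} for $M(X_i^{(n)})$. The mildly subtle point, if any, is to remember that ``$\OO_F$-multilinear'' for $\mu_n$ means that $[a]_u$ in a single source slot translates to $[a]_{\LT}$ on the target, which is exactly what allows the final reassembly as $[\tr M]_{\LT}$.
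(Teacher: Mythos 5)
Your proof is correct and is exactly the argument the paper has in mind when it calls the lemma ``simple'' and omits a proof: expand $M(X_i^{(n)})$ by additivity of $\mu_n$ in the $i$-th slot, push each scalar $[a_{ij}]_u$ through to $[a_{ij}]_{\LT}$ by the $\OO_F$-multilinearity of Thm.~\ref{functorial}, kill all off-diagonal terms with the alternating property, and recombine the surviving diagonal terms into $[\tr M]_{\LT}(\Delta^{(n)})$.
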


\section{An affinoid with good reduction}

We now reach the technical heart of the paper.   In this section we will construct an  open affinoid neighborhood $\mathfrak{Z}$ around an unramified canonical point $x$  whose reduction is as in Thm.~\ref{mainthm}.  These affinoids appear as connected components of the preimage of a subdisc $\mathfrak{X}(1)^1$ inside of the polydisc $\mathfrak{X}(1)$.  The polydisc $\mathfrak{X}(1)^1$ is small enough so that the local system $\Fc^{\text{univ}}[\pi]$ may be trivialized over $\mathfrak{X}(1)^1$, which is to say that the quotient map $\mathfrak{X}(\pi)\to\mathfrak{X}(1)$ admits a section over $\mathfrak{X}(1)^1$.  An approximation to this section is computed explicitly in \S\ref{analsects}.   A consequence is that the preimage of $\mathfrak{X}(1)^1$ in $\mathfrak{X}(\pi)$ is a disjoint union of polydiscs $\mathfrak{X}(\pi)^{1,x}$ indexed by the canonical points of $\mathfrak{X}(\pi)$.

In \S\ref{invcoord} we turn to the space $\mathfrak{X}(\pi^2)$.  An unramified canonical point $x\in\mathfrak{X}(\pi^2)$ determines a subgroup $K_{x,2}$ of $\GL_h(\OO_F)$ lying properly between $1+\pi M_h(\OO_F)$ and $1+\pi^2 M_h(\OO_F)$. Let \[\mathfrak{X}(K_{x,2})=\mathfrak{X}(\pi^2)/K_{x,2}.\]  Then the affinoid $\mathfrak{Z}$ of Thm.~\ref{mainthm} is the preimage of $\mathfrak{X}(\pi)^{1,x}$ in $\mathfrak{X}(K_{x,2})$.  We introduce a family of coordinates $Y(\zeta)$ on $\mathfrak{X}(\pi^2)$ which are invariant under $K_{x,2}$, one for each $\zeta$ in $\OO_E$.    (The formation of the $Y(\zeta)$ is modeled on the determinant functor $\mu_2$ from \S\ref{determinants}.)  Thus the $Y(\zeta)$ are analytic functions on $\mathfrak{X}(K_{x,2})$;  it turns out (Prop.~\ref{Yzeta}) that the $Y(\zeta)$ are integral functions on $\mathfrak{Z}$.  A simple linear combination $Y$ of the coordinates $Y(\zeta)$ generates the ring of integral analytic functions on $\mathfrak{Z}$ as an algebra over the ring of integral analytic functions on the polydisc $\mathfrak{X}(\pi)^{x,1}$.   The
equation for the reduction $\overline{\mathfrak{Z}}$ follows from the congruence calculated in Prop.~\ref{Yprop}.

We often work with affinoid algebras $\mathcal{B}$ over a field $E$, where $E/F$ is a finitely ramified extension contained in the completion of the separable closure of $F$.  For $f\in\mathcal{B}$ we write $v(f)$ for the infimum of $v(f(z))$ as $z$ runs though $\Spm\mathcal{B}$.

\subsection{\texorpdfstring{Analytic sections of $\Fc^{\univ}[\pi]$}{Analytic sections of F[pi]}}
\label{analsects}
Let $E/F$ be the unramified extension of degree $h$, so that $\OO_E=k_h\ps{\pi}$.  Let $\Fc_0$ be the deformation obtained by specializing the variables $u_i$ to 0 in $\Fc^{\univ}$, so that $[\pi]_{\Fc_0}(X)=\pi X+X^{q^h}$.   Then $\Fc_0$ admits endomorphisms by $\OO_E$.  As a formal $\OO_E$-module, $\Fc_0$ has height 1.  We will denote by $x^{(0)}$ the unramified canonical point in $\mathfrak{X}(1)$ corresponding to $\Fc_0$.

For $n\geq 1$, let $E_n$ be the extension of $\hat{E}^{\nr}$ given by adjoining the roots of $[\pi^n]_{\Fc_0}(X)$.  Thus the preimages of $x^{(0)}$ in $\mathfrak{X}(\pi)$ are the points $x=x^{(1)}\in \mathfrak{X}(\pi)$ corresponding to Drinfeld bases $x_1,\dots,x_h\in \gp_{E_1}$ for $\Fc_0[\pi]$.
Let $\mathfrak{X}(1)^{1}\subset\mathfrak{X}(1)$ be the affinoid neighborhood defined by the conditions $v(u_i)\geq 1$, $i=1,\dots,h-1$.  Let $V_i=\pi^{-1}u_i$, so that the $V_i$ are a chart of integral coordinates on $\mathfrak{X}(1)^1$.  The ring of integral analytic functions on $\mathfrak{X}(1)^1$ is therefore  $\hat{\OO}_{F^{\nr}}\ta{V_1,\dots,V_{h-1}}$.

We claim that over $\mathfrak{X}(1)^1\otimes E_1$, the local system $\Fc^{\univ}[\pi]$ may be trivialized.  This means that every nonzero torsion point of $\Fc_0[\pi]$ can be ``spread out" to a unique section of $\Fc^{\univ}[\pi]$ over $\mathfrak{X}(1)^1\otimes E_1$.
To be precise:
\begin{prop}
\label{trivialization}  The preimage of $\mathfrak{X}(1)^1\otimes E_1$ in $\mathfrak{X}(\pi)\otimes E_1$ is the disjoint union of polydiscs $\mathfrak{X}(\pi)^{1,x}$ over $E_1$, each containing a unique unramified canonical point $x$.  For such a point $x$, corresponding to the basis $x_1,\dots,x_h$ of $\Fc_0[\pi]$, we have the following congruence, valid in the ring of integral analytic functions on $\mathfrak{X}(\pi)^{1,x}$:
\begin{equation}
\label{xr1}
X_r^{(1)}\equiv (-1)^{h-1}\det
\begin{pmatrix}
  V_1 & V_2 &\cdots & V_{h-1} & x_r\\
  1  & V_1^q &\cdots & V_{h-2}^q & x_r^q +\pi x_rV_{h-1}^q\\
   0 & 1 & \cdots & V_{h-3}^{q^2} & x_r^{q^2} +\pi x_rV_{h-2}^{q^2}\\
 \vdots & & \ddots & &\vdots \\
  0 & 0 &\cdots & 1& x_r^{q^{h-1}}+\pi x_rV_1^{q^{h-1}}
\end{pmatrix}
\end{equation}
modulo $\pi^{q-1+\frac{q}{q^h-1}}$.
\end{prop}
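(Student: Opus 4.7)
My plan is threefold: (i) trivialize the local system $\Fc^{\univ}[\pi]\setminus\{0\}$ over $\mathfrak{X}(1)^1\otimes E_1$ via a Newton-polygon-plus-Hensel argument, (ii) deduce the decomposition of the preimage into polydiscs indexed by Drinfeld bases of $\Fc_0[\pi]$, and (iii) establish the explicit congruence~\eqref{xr1} by verifying that the right-hand side provides a sufficiently accurate Hensel starting approximation.

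On $\mathfrak{X}(1)^1$, the integral coordinates are $V_i=u_i/\pi$, and the universal isogeny takes the form
\[
[\pi]_{\Fc^{\univ}}(X)=\pi X+\pi V_1X^q+\dots+\pi V_{h-1}X^{q^{h-1}}+X^{q^h}.
\]
Viewed as a polynomial over the Tate algebra $\hat{\OO}_{F^{\nr}}\ta{V_1,\dots,V_{h-1}}$ with the Gauss valuation, the Newton polygon of $[\pi]_{\Fc^{\univ}}(X)/X$ is a single segment from $(0,1)$ to $(q^h-1,0)$, so all $q^h-1$ nonzero roots share valuation $1/(q^h-1)$. At the canonical point $V=0$ the polynomial specializes to $\pi X+X^{q^h}$, whose nonzero roots are precisely the $(q^h-1)$-th roots of $-\pi$; by construction these generate $E_1$ over $\hat{E}^{\nr}$, and they are pairwise separated in valuation by exactly $1/(q^h-1)$. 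Hensel's lemma in $\hat{\OO}_{E_1}\ta{V_1,\dots,V_{h-1}}$ then lifts each such root $x_r\in\Fc_0[\pi]$ to a unique section $X_r$ of $\Fc^{\univ}[\pi]$. Because $\mathfrak{X}(\pi)\otimes E_1\to\mathfrak{X}(1)\otimes E_1$ is finite \'etale over $\mathfrak{X}(1)^1\otimes E_1$ (the $q^h$ roots are everywhere distinct with constant valuation), this trivialization splits the preimage of $\mathfrak{X}(1)^1\otimes E_1$ into $\lvert\GL_h(k)\rvert$ polydisc sheets $\mathfrak{X}(\pi)^{1,x}$, one for each Drinfeld basis $(x_1,\dots,x_h)$ of $\Fc_0[\pi]$, and the unique unramified canonical point of $\mathfrak{X}(\pi)^{1,x}$ is the one tautologically attached to that basis.

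For the congruence, let $\tilde X_r$ be the right-hand side of~\eqref{xr1}; cofactor expansion along the first row shows $\tilde X_r\mid_{V=0}=x_r$. I would verify by direct expansion that
\[
v\bigl([\pi]_{\Fc^{\univ}}(\tilde X_r)\bigr)\ \ge\ q+\frac{q}{q^h-1}
\]
in the Gauss valuation. By multilinearity in the last column the determinant splits as $(-1)^{h-1}\det A+(-1)^{h-1}\det\tilde A$, where $A$ has last column $(x_r^{q^j})_{j=0}^{h-1}$ and $\tilde A$ has last column $(0,\pi x_rV_{h-1}^q,\pi x_rV_{h-2}^{q^2},\dots,\pi x_rV_1^{q^{h-1}})^{\top}$. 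Substituting $\tilde X_r$ into $[\pi]_{\Fc^{\univ}}(X)$ and exploiting that Frobenius is additive in characteristic $p$, each $\tilde X_r^{q^i}$ is computed as the determinant of the entrywise $q^i$-power of the matrix; the $A$-contribution cancels the bulk of the $\pi V_iX^{q^i}$ terms against the $\pi X$ term, while the $\tilde A$-correction is precisely engineered to absorb the next wave of cross terms that arise when the identity $x_r^{q^h}=-\pi x_r$ is applied iteratively to push the $x_r^{q^{h+j}}$ entries back into low Frobenius degree.

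Finally, since $\ch F=p$ the derivative $[\pi]'_{\Fc^{\univ}}(X)=\pi$ is identically constant, so the quantitative Hensel lemma over the Tate algebra produces a unique root $X_r^\ast$ of $[\pi]_{\Fc^{\univ}}$ near $\tilde X_r$ with
\[
v(X_r^\ast-\tilde X_r)\ \ge\ v\bigl([\pi]_{\Fc^{\univ}}(\tilde X_r)\bigr)-1\ \ge\ q-1+\frac{q}{q^h-1}.
\]
Because this bound strictly exceeds the pairwise root separation $1/(q^h-1)$, and $X_r^\ast$ specializes to $x_r$ at $V=0$, it must coincide with the section $X_r=X_r^{(1)}$ produced in stage~(i), which yields~\eqref{xr1}. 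The principal technical obstacle is the determinantal calculation in the previous paragraph; I expect that the Moore-matrix / $\tau$-shift viewpoint of Remark~\ref{hypersurfaceremark}, viewing $\tilde X_r$ as the natural perturbation of a Moore determinant by the element $g=1+V_1\tau+\dots+V_{h-1}\tau^{h-1}$ acting on the $\tau$-module generated by $x_r$, will give the cleanest bookkeeping of the cancellations.
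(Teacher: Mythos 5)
Your structural outline matches the paper's proof closely: Newton polygons and Hensel to trivialize $\Fc^{\univ}[\pi]$ over $\mathfrak{X}(1)^1\otimes E_1$, the resulting decomposition of the preimage into polydiscs, and then showing the explicit determinant $\tilde X_r$ is an approximate root of $[\pi]_{\Fc^{\univ}}$ to the accuracy $q+\frac{q}{q^h-1}$, so that Hensel/Newton polygon forces $v(X_r^{(1)}-\tilde X_r)\geq q-1+\frac{q}{q^h-1}$. Your observation that $[\pi]'_{\Fc^{\univ}}\equiv\pi$ in characteristic $p$ and the final Hensel bound are both correct and in fact recover the exponent in the statement more cleanly than the paper's last sentence (which has a small typographical slip there).

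However, there is a genuine gap exactly where you flag one: the assertion $v\bigl([\pi]_{\Fc^{\univ}}(\tilde X_r)\bigr)\geq q+\frac{q}{q^h-1}$ is the entire content of the argument and is not established, only narrated. The paper's route is to write $\tilde X_r = \sum_{i=1}^{h-1}V_iA_i + x_rA_h$ via cofactor expansion (with $A_i$ the signed $(1,i)$-minors and $A_h=1$), and then to prove, for $i=1,\dots,h-1$, the congruence $\tilde X_r^{q^i}\equiv -A_i$ modulo $\pi^{q+\frac{q}{q^h-1}}$ together with $\tilde X_r^{q^h}\equiv -\pi x_r$. Once these are in hand, the telescoping $\pi\tilde X_r + \sum_i\pi V_i\tilde X_r^{q^i} + \tilde X_r^{q^h}\equiv \pi\bigl(\sum V_iA_i + x_r\bigr) - \pi\sum V_iA_i - \pi x_r = 0$ gives the required bound immediately. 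Your heuristic ("the $A$-contribution cancels the bulk of the $\pi V_i X^{q^i}$ terms against the $\pi X$ term") is pointing at this cancellation, and your remark that $\tilde X_r^{q^i}$ equals the determinant of the entrywise Frobenius is the right starting point, but the passage from that Frobenius'd determinant to $-A_i$ requires substituting $x_r^{q^h}=-\pi x_r$, controlling the $O(\pi^2 x_r)$ error terms this introduces, moving the last column to position $h-i+1$ to produce a block upper-triangular matrix, and then using the trailing $1$ in column $h-i$ to clear the entries above it. None of these manipulations is routine, and the valuation bookkeeping at each stage is precisely what guarantees the error stays below $\pi^{q+\frac{q}{q^h-1}}$. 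Without carrying this out, the proposal is an outline rather than a proof; the Moore-determinant/$\tau$-module reformulation you suggest would indeed organize the bookkeeping, but you would still have to verify the same congruences.
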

\begin{proof}
Let $x_1,\dots,x_h$ be a basis of $\Fc_0[\pi]$.  Consider the polynomial $[\pi]_{\Fc^{\univ}}(X)= \pi X +\pi V_1 X^q+\dots+ \pi V_{h-1}X^{q^{h-1}} + X^{q^h}\in \OO_F\ta{V_1,\dots,V_{h-1}}[X]$.    By studying the Newton polygon of the translate $[\pi]_{\Fc^{\univ}}(X-x_r)$, we find that there is a unique root $X_r\in \OO_{E_1}\ta{V_1,\dots,V_{h-1}}$ of $[\pi]_{\Fc^{\univ}}(X)$ for which $v(X_r-x_r)>v(x_r)=1/(q^h-1)$.  This root satisfies $v(X_r-x_r)=v(x_r^q)=q/(q^h-1)$.    Then $v(X_r-x_s)=1/(q^h-1)$ for $r\neq s$.  This already implies that the preimage of $\mathfrak{X}(1)^1\otimes E_1$ in $\mathfrak{X}(\pi)\otimes E_1$ is the union of polydiscs $\mathfrak{X}(\pi)^{1,x}$, where $\mathfrak{X}(\pi)^{1,x}$ is the affinoid described by the inequalities $v(X_r^{(1)}-x_r)\geq v(x_r^q)$, $r=1,\dots,h$.

Now let $D\in\OO_{E_1}[V_1,\dots,V_{h-1}]$ be the expression on the right hand side of Eq.~\eqref{xr1}.  Expand the determinant in Eq.~\eqref{xr1} along its first row and label the minors $A_1,\dots A_h$, signed appropriately so that
\begin{equation}
\label{minors}
D=\sum_{i=1}^{h-1} V_iA_i+ x_rA_h.
\end{equation}
That is,
\begin{equation}
\label{minorsformula}
A_i = (-1)^{h-i} \det
\begin{pmatrix}
V_1^{q^i} & V_2^{q^i} &\cdots & V_{h-i-1}^{q^i} & x_r^{q^i} + \pi x_rV_{h-i}^{q^i}\\
1 & V_1^{q^i} & \cdots   & V_{h-i-2}^{q^{i+1}} & x_r^{q^{i+1}} + \pi x_rV_{h-i-1}^{q^{i+1}} \\
0 & 1 & \cdots & V_{h-i-3}^{q^{i+2}} & x_r^{q^{i+2}} +\pi x_rV_{h-i-2}^{q^{i+2}} \\
\vdots &  & \ddots & \vdots & \vdots \\
0 & 0 & \cdots & 1 & x_r^{q^{h-1}}+\pi x_rV_1^{q^{h-1}}
\end{pmatrix}
\end{equation}
for $i=1,\dots,h-1$, and $A_h=1$.

In order to complete the proof of Prop.~\ref{trivialization}, we will show that $[\pi]_{\Fc^{\univ}}(D)$ is sufficiently close to 0 to ensure the congruence in Eq.~\eqref{xr1}.   

Observe that for $i=1,\dots,h-1$ we have the following congruence modulo $\pi^{q+\frac{q}{q^h-1}}$:

{\small
\[D^{q^i} \equiv (-1)^{h-1}\det
\begin{pmatrix}
V_1^{q^i} & V_2^{q^i} & \cdots & V_{h-i}^{q^i} & V_{h-i+1}^{q^i} & \cdots &V_{h-1}^{q^i} & x_r^{q^i}\\
1 & V_1^{q^{i+1}} & \dots & V_{h-i-1}^{q^{i+1}}& V_{h-i}^{q^{i+1}} & \cdots &V_{h-2}^{q^{i+1}} & x_r^{q^{i+1}} \\
\vdots & & \ddots & \vdots & \vdots & & & \vdots \\
0 & 0 & \dots & V_1^{q^{h-1}} & V_2^{q^{h-1}} & \dots & V_{i}^{q^{h-1}} & x_r^{q^{h-1}}\\
0 & 0 & \dots & 1 & V_1^{q^h} & \dots & V_{i-1}^{q^{h}} & -\pi x_r\\
0 & 0 & \dots & 0 & 1 & \dots & V_{i-2}^{q^{h+1}} & 0 \\
\vdots & & & \vdots & \vdots & \ddots & \vdots & \vdots \\
0 & 0 & \dots & 0 & 0 & \dots & 1 & 0
\end{pmatrix}\]
}

Placing the final column of this matrix into position $(h-i+1)$ transforms the above matrix into one of the form $\tbt{A}{B}{0}{C}$, where $A$ is a matrix with dimensions $(h-i+1)\times (h-i+1)$ and $C$ is an upper triangular matrix with 1s along the diagonal.  We find
{\small
\begin{equation}
\label{xrqi}
 D^{q^i}\equiv (-1)^{h+i}\det
\begin{pmatrix}
V_1^{q^i} & V_2^{q^i} & \cdots    & V_{h-i-1}^{q^i} & V_{h-i}^{q^i} & x_r^{q^i} \\
1              & V_1^{q^{i+1}} & \cdots & V_{h-i-2}^{q^{i+1}} & V_{h-i-1}^{q^{i+1}} & x_r^{q^{i+1}} \\
0              & 1                     & \cdots & V_{h-i-3}^{q^{i+2}} & V_{h-i-2}^{q^{i+2}} & x_r^{q^{i+2}} \\
\vdots      &                         & \ddots&                              & \vdots\\
0             & 0                       & \cdots& 1& V_1^{q^{h-1}}      & x_r^{q^{h-1}} \\
0             &  0                      & \cdots& 0 & 1                          & -\pi x_r
\end{pmatrix}\kern-5pt\pmod{\pi^{q+\frac{q}{q^h-1}}}.
\end{equation}
}
We can apply elementary row operations to use the 1 in column $h-i$ of this matrix to cancel the entries above it.  When this is done, we find
\begin{equation}
\label{DeqA}
D^{q^i}\equiv -A_i\pmod{\pi^{q+\frac{q}{q^h-1}}},\;i=1,\dots,h-1
\end{equation}
where $A_1,\dots,A_{h-1}$ are the minors from Eq.~\eqref{minorsformula}.
We also have
\begin{equation}
\label{xrqh}
D^{q^h}\equiv -\pi x_r\equiv -\pi x_rA_h \pmod{\pi^{q+\frac{q}{q^h-1}}}.
\end{equation}
Combining Eqs.~\eqref{minors},~\eqref{DeqA} and~\eqref{xrqh} gives
\begin{eqnarray*}
[\pi]_{\Fc^{\univ}}(D)
&=&\pi D+\pi V_1D^q + \dots +\pi V_{h-1}D^{q^{h-1}}+D^{q^h}\\
&\equiv& \pi D - \pi (V_1A_1+\dots+V_{h-1}A_{h-1}+x_rA_h)\\
&\equiv&0\pmod{\pi^{q+\frac{q}{q^h-1}}}.
\end{eqnarray*}

The ring of integral analytic functions on the polydisc $\mathfrak{X}(\pi)^{1,x}$ is $\OO_{E_1}\ta{V_1,\dots,V_h}$.  In this ring we have the congruences $D\equiv X_r^{(1)} \equiv x_r\pmod{x_r^q}$.   Let $Y=D-X_r^{(1)}$.  Then $Y\equiv 0\pmod{x_r^q}$ and $[\pi]_{\Fc^{\univ}}(Y)\equiv 0\pmod{\pi^{q+1/(q^h-1)}}$.  Examining the Newton polygon of $[\pi]_{\Fc^{\univ}}(X)$ shows that $Y\equiv 0\pmod{\pi^{q-1+1/(q^h-1)}}$.
\end{proof}

\subsection{\texorpdfstring{Some invariant coordinates on $\mathfrak{X}(\pi^2)$.}{Some invariant coordiates on X(pi2)}}
\label{invcoord}

Choose a compatible system of bases $x_1^{(n)},\dots,x_h^{(n)}$ for $\Fc_0[\pi^n]$, $n\geq 1$.  This is tantamount to choosing a compatible system of unramified canonical points $x^{(n)}\in\mathfrak{X}(\pi^n)$ lying above the point $x^{(0)}\in\mathfrak{X}(1)$ corresponding to the deformation $\Fc_0$.
Since $\Fc_0$ admits $\OO_F$-linear endomorphisms by $\OO_E$, our choice of compatible system induces an embedding of $\OO_E$ into $\mathfrak{A}=M_h(\OO_F)$, and we identify $\OO_E$ with its image.  For $M\in \mathfrak{A}$, recall the definition of $M(X_i^{(n)})$ from Eq.~\eqref{MX}.  We have $\zeta(X_i^{(n)})(x^{(n)}) = \zeta x_i^{(n)}$ for $i=1,\dots,h$, $\zeta\in k_h$.

The unit group $\mathfrak{A}^\times=\GL_h(\OO_F)$ has the usual filtration $U^n_{\mathfrak{A}}=1+\gp^n\mathfrak{A}$, $n\geq 1$.   Let $C\subset \mathfrak{A}$ be the orthogonal complement of $\OO_E$ under the standard trace pairing, and let $\gp_E$ be the maximal ideal of $\OO_E$.  Define a subgroup $K_{x,2}$ of $\mathfrak{A}^\times$ by
\[ K_{x,2} = 1+\gp_E^2  + \gp_E C,\]
so that $K_{x,2}$ lies between $U_{\mathfrak{A}}^1$ and $U_{\mathfrak{A}}^2$.  In what follows we will assume the choice of $x$ is fixed and write simply $K_2$.   Write $\mathfrak{X}(K_2)$ for the quotient of $\mathfrak{X}(\pi^2)$ by $K_2$.

We shall construct an alternating $k$-linear expression $Y$ in the canonical coordinates $X_1^{(2)},\dots,X_h^{(2)}$ which is fixed by $K_2$, so that it descends to an analytic function on $\mathfrak{X}(K_2)$.  It happens that $Y$ satisfies a polynomial equation with coefficients in $\OO_{E_2}\ta{V_1,\dots,V_h}$ whose reduction modulo the maximal ideal of $\OO_{E_2}$ gives the smooth hypersurface of Thm.~\ref{mainthm}.

We continue using the shorthand $X_r=X_r^{(1)}$.   We introduce the new shorthand $Y_r=X_r^{(2)}$,  so that $[\pi]_{\Fc^{\univ}}(Y_r)=X_r$.  Also we let $\Delta=\Delta^{(1)}=\mu(X_1,\dots,X_h)$;  this is a locally constant function satisfying $\Delta^{q-1}=(-1)^h\pi$.  For $\zeta\in \OO_E$, let
\[ W(\zeta)=\mu(\zeta(Y_1),X_2,\dots,X_h)
+\dots+\mu(X_1,X_2,\dots,\zeta(Y_h)). \]
Note that $W(1)=\mu_2(X_1,\dots,X_h)=\Delta^{(2)}$.   We record the action of $U_{\mathfrak{A}}^1$ on the functions $W(\zeta)$:  For $g=1+\pi M\in U_{\mathfrak{A}}^1$, we have
\begin{equation}
\label{gaction}
g(W(\zeta)) = W(\zeta) + [\tr (M\zeta)]_{\LT}(\Delta)
\end{equation}
by Lemma~\ref{trM}.   It follows that $W(\zeta)$ is invariant under $K_2$, and that $[\pi]_{\LT}(W(\zeta))$ is invariant under $U_{\mathfrak{A}}^1$, so that $[\pi]_{\LT}(W(\zeta))$ belongs to $\mathcal{A}(\pi)$.  We can see this directly:  by Eq.~\eqref{piLT} we have
\begin{equation}
\label{Wzetamatrix}
[\pi]_{\LT}(W(\zeta))=\det
\begin{pmatrix}
\zeta(X_1) & X_1^q & \cdots & X_1^{q^{h-1}} \\
\vdots        & \vdots & \ddots & \vdots \\
\zeta(X_h) & X_h^q & \cdots & X_h^{q^{h-1}}
\end{pmatrix},
\end{equation}
which visibly belongs to $\mathcal{A}(\pi)$.

We will use the symbol $x$ to denote our compatible system of canonical points $x^{(n)}\in \mathfrak{X}(\pi^n)$.  Then $f(x)$ is well-defined when $f$ is an analytic function on any of the spaces $\mathfrak{X}(\pi^n)$.  We will use $\mathfrak{X}(\pi)^{1,x}$ to refer to the polydisc constructed in \S\ref{analsects} using the canonical point $x^{(1)}$.

By Prop.~\ref{trivialization}, the restriction of the function $[\pi]_{\LT}(W(\zeta))$ to $\mathfrak{X}(\pi)^{1,x}$ lies in $\OO_{E_1}\ta{V_1,\dots,V_h}$, where we recall that the variables $V_r=\pi^{-1}u_r$ form our chart of integral coordinates on $\mathfrak{X}(1)^{1}$.   Let $\mathfrak{Z}$ be the preimage of the polydisc $\mathfrak{X}(\pi)^{1,x}$ in $\mathfrak{X}(K_2)\otimes E_2$.  It will be useful to transform the functions $W(\zeta)$ into integral functions $Y(\zeta)$ on $\mathfrak{Z}$ for which $\abs{Y(\zeta)}_{\mathfrak{Z}}=1$.   Let $w(\zeta)=W(\zeta)(x)$, and let
\begin{equation}
\label{Yzetadef}
Y(\zeta) = (-1)^{h-1}\frac{W(\zeta) - w(\zeta)}{\Delta}.
\end{equation}

\begin{prop}
\label{Yzeta}
There exists $\eps>0$ for which the congruence
\[
Y(\zeta)^q - Y(\zeta) \equiv
\begin{pmatrix}
V_1 & V_2 & \dots & V_{h-1} & 0 \\
1     &  V_1^q & \dots & V_{h-2}^q & (\zeta^q-\zeta)V_{h-1}^q \\
0    &   1   &  \dots  &  V_{h-3}^{q^2}  & (\zeta^{q^2}-\zeta)V_{h-2}^{q^2} \\
\vdots & & \ddots& & \vdots  \\
0 &  0 & \cdots & 1 & (\zeta^{q^{h-1}}-\zeta)V_1^{q^{h-1}}
\end{pmatrix}\pmod{\pi^\eps}\]
is valid in the ring of integral analytic functions on $\mathfrak{Z}$.
\end{prop}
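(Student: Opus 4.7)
The plan is to proceed in three main stages, with the principal difficulty concentrated in the last.

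\emph{Step 1: Algebraic reduction.} First I will transform the claimed congruence into a statement about an explicit determinant. Since $\ch F > 0$, the $q$-th power map is a ring homomorphism, so from the definition $Y(\zeta) = (-1)^{h-1}(W(\zeta) - w(\zeta))/\Delta$ one computes directly
\[
Y(\zeta)^q - Y(\zeta) = (-1)^{h-1}\frac{W(\zeta)^q - w(\zeta)^q - \Delta^{q-1}\bigl(W(\zeta) - w(\zeta)\bigr)}{\Delta^q}.
\]
The identities $\Delta^{q-1} = (-1)^h\pi$ from Lemma~\ref{level1det} and $[\pi]_{\LT}(T) = \pi T + (-1)^{h-1}T^q$ then combine to yield
\[
Y(\zeta)^q - Y(\zeta) = \frac{G(\zeta) - G(\zeta)(x)}{\Delta^q},
\]
where $G(\zeta) := [\pi]_{\LT}(W(\zeta))$ is precisely the $h \times h$ determinant in Eq.~\eqref{Wzetamatrix}. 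The task thus reduces to computing $G(\zeta) - G(\zeta)(x)$ modulo $\pi^\eps\Delta^q$ on the polydisc $\mathfrak{X}(\pi)^{1,x}$.

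\emph{Step 2: Trivialization of the sections.} Next I will apply Prop.~\ref{trivialization} to approximate $G(\zeta)$. Let $D_r$ denote the explicit integral expression $(-1)^{h-1}$ times the determinant displayed in that proposition, so that $X_r \equiv D_r$ modulo $\pi^\delta$ with $\delta = q - 1 + q/(q^h-1)$. I claim that $\zeta(X_r) \equiv D_{r,\zeta}$ modulo $\pi^\delta$, where $D_{r,\zeta}$ is obtained from $D_r$ by substituting $\zeta x_r$ in place of $x_r$ throughout (so its last column has entries $\zeta x_r$ on top and $\zeta^{q^i}x_r^{q^i} + \pi\zeta x_r V_{h-i}^{q^i}$ below). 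This claim rests on two observations: the embedding $\OO_E \hookrightarrow M_h(\OO_F)$ reduces modulo $\pi$ to a matrix with entries in $k$ (hence invariant under every Frobenius power), and $D_r$ is linear in the basis element $x_r$ of $\Fc_0[\pi]$. Substituting these congruences into $G(\zeta)$ (with the sign $((-1)^{h-1})^h = 1$ absorbed) gives
\[
G(\zeta) \equiv \det\bigl(D_{r,\zeta}\,\big|\,D_r^q\,\big|\,\cdots\,\big|\,D_r^{q^{h-1}}\bigr)_{r} \pmod{\pi^\delta}.
\]

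\emph{Step 3: Extraction of $N(\zeta)$.} Finally I will invoke the intermediate congruence $D_r^{q^i} \equiv -A_i^{(r)}$ modulo $\pi^{q+q/(q^h-1)}$ proved within Prop.~\ref{trivialization} to replace each column $D_r^{q^i}$ by the explicit minor $-A_i^{(r)}$. Splitting $D_{r,\zeta} = \zeta x_r + \tilde D_{r,\zeta}$ and $A_i^{(r)} = -x_r^{q^i} + \tilde A_i^{(r)}$, with tildes denoting the $V_j$-dependent corrections, and expanding the resulting determinant by multilinearity in its columns, the $V$-free summand exactly reproduces $G(\zeta)(x)$ and cancels in the difference. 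Dividing the remaining $V$-dependent sum by $\Delta^q = (-1)^h\pi\Delta$, the dominant contribution should be the determinant $N(\zeta)$ of the proposition, while all other contributions have valuation $\geq \eps$ uniformly on $\mathfrak{Z}$ for some fixed $\eps > 0$.

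The hard part will be the bookkeeping in Step 3. After dividing by $\Delta^q$, the many $x_r$-dependent quantities --- whose valuations involve the fractions $1/(q^h-1)$ and $1/(q-1)$ --- must collapse, through the Moore-determinant structure $\Delta = \mu(X_1,\dots,X_h)$, into the factor $\Delta^q$ in the denominator, leaving behind a polynomial in only the $V_j$'s and the Frobenius differences $\zeta^{q^i} - \zeta$, in the specific shape asserted by $N(\zeta)$ (in particular with a vanishing top entry in the last column). Verifying this cancellation and confirming that each spurious term has valuation at least $\eps$ will require patience with the multilinear expansion, though the underlying algebraic mechanism --- the interplay between $\Delta^{q-1} = (-1)^h\pi$, the defining equation $[\pi^2]_{\LT}(W(\zeta)) = \Delta$, and the specific Laplace-expansion structure of the minors $A_i^{(r)}$ --- is transparent.
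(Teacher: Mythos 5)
Your Steps 1 and 2 track the paper's proof closely and are essentially correct: the algebraic identity $Y(\zeta)^q - Y(\zeta) = ([\pi]_{\LT}(W(\zeta)) - [\pi]_{\LT}(w(\zeta)))/\Delta^q$ is exactly what the paper uses (stated as $[\pi]_{\LT}(W(\zeta)) = [\pi]_{\LT}(w(\zeta)) + (-1)^h\pi\Delta(Y(\zeta)^q - Y(\zeta))$), and the approximation of $\zeta(X_r)$ by the Frobenius-twisted substitute $D_{r,\zeta}$ is the same move the paper makes in its Eqs.~\eqref{zetaxr} and~\eqref{XrBi}, with a comparable level of justification.

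The genuine gap is Step 3, and it is not just ``bookkeeping.'' The paper does not expand an $h \times h$ determinant multilinearly column by column; instead it factors the $h\times h$ matrix whose entries approximate $\zeta(X_r)$ and $X_r^{q^i}$ as a product $M(\zeta)\cdot(x_r^{q^j})_{r,j}$ (Eq.~\eqref{XBzeta}), so that the determinant cleanly splits off $\mu(x_1,\dots,x_h) = \Delta$, and then crucially it invokes a standalone determinant identity, Eq.~\eqref{BV}, whose engine is the observation that the assignment $(V_1,\dots,V_{h-1}) \mapsto (B_1,\dots,B_{h-1})$ is an involution. That identity is precisely what converts the resulting polynomial in the $B_i$ into the specific $V_i$-determinant $N(\zeta)$ of the proposition. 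Your proposal replaces the columns by the minors $A_i^{(r)}$ from Prop.~\ref{trivialization} and expands by multilinearity, asserting that the dominant $V$-dependent contribution ``should be'' $N(\zeta)$ and that the mechanism is ``transparent''; but nothing in your outline shows how the expanded sum recognizes $N(\zeta)$, and absent the involution/BV identity there is no reason to expect it to do so in a transparent way. You have in effect deferred the entire content of the proposition to an unverified claim. Additionally, the ``defining equation $[\pi^2]_{\LT}(W(\zeta)) = \Delta$'' you cite as part of the ``transparent mechanism'' is incorrect: one has $[\pi]_{\LT}(W(1)) = \Delta$ (and $[\pi^2]_{\LT}(W(1)) = 0$), and more generally only the approximate relation $[\pi]_{\LT}(w(\zeta)) \equiv \zeta\Delta$ (Eq.~\eqref{w}); since you present this misremembered identity as part of the reason the final cancellation should work, it suggests the collapsing you anticipate has not been thought through.
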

\begin{proof}
The idea is to apply Prop.~\ref{trivialization} to Eq.~\eqref{Wzetamatrix}.  In preparation for this, we need some determinant identities.  For $i=1,\dots,h$, let $B_i\in k[V_1,\dots,V_{h-1}]$ be $(-1)^i$ times the determinant of the top left $i\times i$ submatrix of
\[
\begin{pmatrix}
V_1 & V_2 & \cdots & V_{h-1} & 0 \\
1   & V_1^q & \cdots & V_{h-2}^q & V_{h-1}^q \\
0 &  1 & \cdots & V_{h-3}^{q^2} & V_{h-2}^{q^2} \\
\vdots & & \ddots & & \vdots \\
0 & 0 & \cdots & 1 & V_1^{q^{h-1}}
\end{pmatrix}
\]
Curiously, the transformation $(V_1,\dots,V_{h-1})\mapsto (B_1,\dots,B_{h-1})$ is an involution.   That is, the determinant of the top left $i\times i$ submatrix of
\[
\begin{pmatrix}
B_1 & B_2 & \cdots & B_{h-1} & 0 \\
1   & B_1^q & \cdots & B_{h-2}^q & B_{h-1}^q \\
0 &  1 & \cdots & B_{h-3}^{q^2} & B_{h-2}^{q^2} \\
\vdots & & \ddots & & \vdots \\
0 & 0 & \cdots & 1 & B_1^{q^{h-1}}
\end{pmatrix}
\]
is $(-1)^iV_i$:  this can be proven by induction on $i$.  This implies the following identity, valid in the polynomial ring $k[V_1,\dots,V_{h-1},z_1,\dots,z_{h-1}]$:

\begin{eqnarray}
\nonumber
\label{BV}\\
&&\nonumber
\kern-40pt\det
\begin{pmatrix}
z_1B_1 & z_2B_2 & \cdots & z_{h-1}B_{h-1} & 0 \\
1  &  B_1^q & \cdots & B_{h-2}^q & B_{h-1}^q \\
0  &  1 &  \cdots & B_{h-3}^{q^2} & B_{h-2}^{q^2} \\
\vdots & & \ddots & & \vdots \\
0 & 0 & \ddots & 1 & B_1^{q^{h-1}}
\end{pmatrix}\\
&=&
\det
\begin{pmatrix}
V_1 & V_2 & \cdots & V_{h-1} & 0 \\
1   & V_1^q & \cdots & V_{h-2}^q & z_1V_{h-1}^q \\
0 &  1 & \cdots & V_{h-3}^{q^2} & z_2V_{h-2}^{q^2} \\
\vdots & & \ddots & & \vdots \\
0 & 0 & \cdots & 1 & z_{h-1}V_1^{q^{h-1}}
\end{pmatrix}
\end{eqnarray}
This is because both expressions equal
\[ z_1B_1V_{h-1}^q + z_2B_2V_{h-2}^{q^2}+\dots+z_{h-1}B_{h-1}V_1^{q^{h-1}}.\]

According to Prop.~\ref{trivialization}, the coordinate $X_r$ may be expressed modulo $\pi^{q-1+\frac{q}{q^h-1}}$ as a linear combination of the powers $x_r,\dots,x_r^{q^{h-1}}$:
\begin{equation}
X_r\equiv (1-\pi B_h)x_r+B_1x_r^q+B_2x_r^{q^2}+\dots+B_{h-1}x_r^{q^{h-1}}\pmod{\pi^{q-1+\frac{q}{q^h-1}}}.
\end{equation}
For $\zeta\in k_h$ we have
{\small
\begin{equation}
\label{zetaxr}
\zeta(X_r)\equiv \zeta (1-\pi B_h)x_r + \zeta^q B_1 x_r^q+\zeta^{q^2}B_2x_r^{q^2} + \dots+\zeta^{q^{h-1}} B_{h-1}x_r^{q^{h-1}}\pmod{\pi^{q-1+\frac{q}{q^h-1}}}
\end{equation}
}
Also, for $i=1,\dots,h-1$ we have
\begin{equation}
\label{XrBi}
X_r^{q^i} \equiv -\pi B_{h-i}^{q^i} x_r +x_r^{q^i} + B_1^{q^i}x_r^{q^{i+1}} + \dots + B_{h-1-i}^{q^i} x_r^{q^{h-1}}\pmod{\pi^{N}},
\end{equation}
where $N\geq q+\frac{q}{q^h-1}$.
Eqs.~\eqref{zetaxr} and~\eqref{XrBi} may be combined into the congruence of matrices
\begin{eqnarray}
\nonumber
\label{XBzeta}
\\
\nonumber
&&
\kern-40pt\begin{pmatrix}
\zeta(X_1)+E_1 & \cdots & \zeta(X_h)+E_h  \\
X_1^q & \cdots         & X_h^q \\
\vdots & \ddots & \vdots \\
X_1^{q^{h-1}} & \cdots  & X_h^{q^{h-1}}
\end{pmatrix}\\
\nonumber
&\equiv&
\begin{pmatrix}
\zeta (1-\pi B_h) & \zeta^q B_1 & \zeta^{q^2} B_2 & \dots & \zeta^{q^{h-1}} B_{h-1} \\
-\pi B_{h-1}^q & 1 & B_1^q & \cdots & B_{h-2}^q \\
-\pi B_{h-2}^{q^2} & 0 & 1 & \cdots & B_{h-3}^{q^2} \\
\vdots & & & \ddots & \vdots \\
-\pi B_1^{q^{h-1}} & 0 & 0 & \cdots & 1
\end{pmatrix}\\
&\times&
\begin{pmatrix}
x_1 &\cdots & x_h \\
\vdots & \ddots & \vdots\\
x_1^{q^{h-1}}  &\cdots & x_h^{q^{h-1}}
\end{pmatrix}
\end{eqnarray}
modulo $\pi^{N}$, where $v(E_i)\geq q-1+q/(q^h-1)$.   We take determinants of both sides of Eq.~\eqref{XBzeta}.  On the left hand side, we apply Eq.~\eqref{Wzetamatrix}:  the determinant is congruent to $[\pi]_{\LT}(W(\zeta))$ modulo an error term $\pi^\delta$, of valuation
\[ \delta\geq q-1+\frac{q}{q^h-1}+\frac{q+q^2+\dots+q^{h-1}}{q^h-1} = q-1+\frac{q-1}{q^h-1} + \frac{1}{q-1}. \]
On the right hand side, the determinant is $\Delta$ times
\[
\zeta - \zeta\pi B_h + (-1)^h\pi\det
\begin{pmatrix}
 \zeta^q B_1 & \zeta^{q^2} B_2 & \dots &  \zeta^{q^{h-1}} B_{h-1} & 0  \\
 1 & B_1^q & \cdots & B_{h-2}^q & B_{h-1}^q \\
 0 &1  &  \cdots & B_{h-3}^{q^2} & B_{h-2}^{q^2}\\
\vdots & & \ddots & & \vdots \\
0 & 0& \cdots & 1 & B_1^{q^{h-1}}
\end{pmatrix},
\]
and by the identity in Eq.~\ref{BV} this equals
\[
\zeta + (-1)^h\pi
\det
\begin{pmatrix}
V_1 & V_2 & \dots & V_{h-1} & 0 \\
1     &  V_1^q & \dots & V_{h-2}^q & (\zeta^q-\zeta)V_{h-1}^q \\
0    &   1   &  \dots  &  V_{h-3}^{q^2}  & (\zeta^{q^2}-\zeta)V_{h-2}^{q^2} \\
\vdots & & \ddots& & \vdots  \\
0 &  0 & \cdots & 1 & (\zeta^{q^{h-1}}-\zeta)V_1^{q^{h-1}}
\end{pmatrix}.\]
Equating determinants of both sides of Eq.~\eqref{XBzeta} now yields
$$\displaylines{
[\pi]_{\LT}(W(\zeta)) \equiv \hfill\cr
\hfill \equiv \zeta\Delta +(-1)^h\pi\Delta
\det
\begin{pmatrix}
V_1 & V_2 & \dots & V_{h-1} & 0 \\
1     &  V_1^q & \dots & V_{h-2}^q & (\zeta^q-\zeta)V_{h-1}^q \\
0    &   1   &  \dots  &  V_{h-3}^{q^2}  & (\zeta^{q^2}-\zeta)V_{h-2}^{q^2} \\
\vdots & & \ddots& & \vdots  \\
0 &  0 & \cdots & 1 & (\zeta^{q^{h-1}}-\zeta)V_1^{q^{h-1}}
\end{pmatrix}\pmod{\pi^\delta}
}$$

The functions $V_1,\dots,V_{h-1}$ vanish at the canonical point $x$;  therefore so do the functions $B_1,\dots,B_{h-1}$.  Applying the above congruence to $x$ gives
\begin{equation}
\label{w}
[\pi]_{\LT}(w(\zeta))\equiv \zeta\Delta\pmod{\pi^\delta}.
\end{equation}
We have $W(\zeta) = w(\zeta) +(-1)^{h-1}\Delta Y(\zeta)$, so that
\[ [\pi]_{\LT}(W(\zeta)) = [\pi]_{\LT}(w(\zeta)) + (-1)^h\pi\Delta(Y(\zeta)^q - Y(\zeta)) \]
Therefore the congruence claimed in the proposition is valid modulo $\pi^\eps$, where
\[\eps = \delta - 1 -\frac{1}{q-1}\geq q-2 +\frac{q-1}{q^h-1} >0.\]
\end{proof}
The functions $Y(\zeta)$ on $\mathfrak{Z}$ each generate a degree $q$ algebra over the field of meromorphic functions on the polydisc $\mathfrak{X}(\pi)^{1,x}$.  But  the morphism $\mathfrak{Z}\to\mathfrak{X}(\pi)^{1,x}\otimes E_2$ has degree $q^h$.  We will now construct a linear combination of the $Y(\zeta)$ which generates the entire ring of integral analytic functions on $\mathfrak{Z}$ as an algebra over $\OO_{E_2}\ta{V_1,\dots,V_{h-1}}$.

Let $\zeta,\zeta^q,\dots,\zeta^{q^h}$ be a basis for $k_h/k$, and let $\beta\in k_h$ be such that
\begin{equation}
\label{traceident}
 \tr_{k_h/k} (\beta\zeta^{q^i}) = \begin{cases} 1,& i=0,\\ 0& i=1,\dots,h-1. \end{cases}
\end{equation}
This implies that $\beta,\dots,\beta^{q^{h-1}}$ is a basis for $k_h/k$ as well.
Let
\begin{equation}
\label{Ydef}
Y=\sum_{i=0}^{h-1} \beta^{q^i} Y(\zeta^{q^i}).
\end{equation}
Then the stabilizer of $Y$ in $U_{\mathfrak{A}}^1$ is exactly $K_2$.
\begin{prop}
\label{Yprop}
There exists $\eps>0$ for which the congruence
\begin{equation}
\label{YV}
Y^{q^h}-Y \equiv
\begin{pmatrix}
V_1^{q^h}-V_1 & V_2^{q^h}-V_2  & \cdots & V_{h-1}^{q^h} - V_{h-1} & 0 \\
1 & V_1^q& \cdots & V_{h-2}^q & V_{h-1}^q \\
0 & 1 &\cdots & V_{h-3}^{q^2} & V_{h-2}^{q^2} \\

\vdots &  & \ddots & & \vdots \\

0 & 0 &  \cdots& 1 & V_1^{q^{h-1}}

\end{pmatrix}
\end{equation}
holds modulo $\pi^\eps$ in the ring of integral analytic functions on $\mathfrak{Z}$. \end{prop}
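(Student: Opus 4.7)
The strategy is to expand $Y^{q^h} - Y$ as a $k_h$-linear combination over the constituent $Y(\zeta^{q^i})$, reduce each term to the Artin-Schreier-type congruence of Prop.~\ref{Yzeta} by telescoping in characteristic $p$, and then collapse the resulting triple sum via the trace identity \eqref{traceident} satisfied by $\beta$. What remains is a purely algebraic identity equating the collapsed sum to the determinant $N$ on the right-hand side of Prop.~\ref{Yprop}.

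Concretely, since $\beta \in k_h$ satisfies $\beta^{q^h} = \beta$ and the $q^h$-th power map is additive in characteristic $p$, the definition of $Y$ in Eq.~\eqref{Ydef} immediately gives
\[
Y^{q^h} - Y = \sum_{i=0}^{h-1} \beta^{q^i}\bigl(Y(\zeta^{q^i})^{q^h} - Y(\zeta^{q^i})\bigr).
\]
For each $\xi = \zeta^{q^i}$ I would use the telescoping $Y(\xi)^{q^h} - Y(\xi) = \sum_{j=0}^{h-1}(Y(\xi)^q - Y(\xi))^{q^j}$ and apply Prop.~\ref{Yzeta}; since raising to $q^j$-th powers in characteristic $p$ only strengthens congruences, one obtains $Y(\xi)^{q^h} - Y(\xi) \equiv \sum_{j} D(\xi)^{q^j}$ modulo a still-positive power of $\pi$, where $D(\xi)$ denotes the determinant on the right-hand side of Prop.~\ref{Yzeta}. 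The last column of that determinant is $k$-linear in the differences $\xi^{q^k} - \xi$, so we may write $D(\xi) = \sum_{k=1}^{h-1}(\xi^{q^k} - \xi) P_k(V)$ with $P_k(V) \in k[V_1,\dots,V_{h-1}]$; via the identity \eqref{BV}, each $P_k$ is (up to sign) $B_k V_{h-k}^{q^k}$. Substituting $\xi = \zeta^{q^i}$ and collecting terms, the sum over $i$ reduces by the identity
\[
\sum_{i=0}^{h-1} \beta^{q^i}\zeta^{q^{i+a}} = \tr_{k_h/k}(\beta\zeta^{q^a}) = \delta_{a \equiv 0 \bmod h}
\]
applied separately to the exponents $i+k+j$ and $i+j$: only the pairs $(k,j)$ with $j=0$ (contributing $-1$) and $j=h-k$ (contributing $+1$) survive, yielding
\[
Y^{q^h} - Y \equiv \sum_{k=1}^{h-1}\bigl(P_k(V)^{q^{h-k}} - P_k(V)\bigr) \pmod{\pi^{\eps'}}
\]
for some $\eps' > 0$.

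The final task, and what I expect to be the main obstacle, is the purely algebraic identity $\sum_{k=1}^{h-1}(P_k^{q^{h-k}} - P_k) = N$ in $k[V_1,\dots,V_{h-1}]$. My plan is to split $N$ by multilinearity in its first row as $N = N^{+} - N^{-}$, where the first rows of $N^{\pm}$ come from the summands $V_i^{q^h}$ and $V_i$ respectively. Cofactor expansion of $N^{-}$ along the last column reproduces exactly the $P_k$ (since the resulting minors are the same ones that define the $P_k$ from $D(\xi)$), while the cofactors of $N^{+}$ should be identifiable with the $q^{h-k}$-Frobenius twists of those of $N^{-}$, a pattern one expects to be governed by the $(V_i) \leftrightarrow (B_i)$ involution established in the proof of Prop.~\ref{Yzeta}. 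Confirming this sign-and-structure bookkeeping in full generality is the combinatorial heart of the proof; in the small cases $h = 2, 3$ it can be verified by direct expansion, which provides reassurance that the identity holds on the nose.
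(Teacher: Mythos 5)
Your strategy matches the paper's through the main computational reduction: the identical expansion of $Y^{q^h}-Y$ as a $k_h$-linear combination, the same telescoping, the same application of Prop.~\ref{Yzeta}, and the same collapse of the resulting triple sum via the orthogonality relations \eqref{traceident}. The intermediate congruence $Y^{q^h}-Y\equiv\sum_{k=1}^{h-1}(P_k^{q^{h-k}}-P_k)$ with $P_k=(-1)^{h-1}B_kV_{h-k}^{q^k}$ is exactly what the paper arrives at, just organized slightly differently (the paper fixes the outer Frobenius index and sums over the trace index first; you collapse the whole triple sum at once). This part is sound, modulo the sign you correctly flag as implicit in the relation $D(\xi)=\sum_k(\xi^{q^k}-\xi)P_k$.

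The place your proposal goes off track is the mechanism offered for the final algebraic identity $\sum_k(P_k^{q^{h-k}}-P_k)=N$. You propose to expand $N^{\pm}$ along the \emph{last column} and identify the cofactors of $N^{+}$ with Frobenius twists of the cofactors of $N^{-}$. This does not work: expanding $N^{+}$ along the last column does \emph{not} reproduce $\sum_k P_k^{q^{h-k}}$ term by term --- the contributions get scrambled across indices. Already at $h=3$ the $m=2$ last-column cofactor term of $N^{+}$ is $-V_2^q V_1^{q^3}$, which is a piece of $P_2^q$, not of $P_1^{q^2}=-V_1^{q^2}V_2^{q^3}$; so there is no "twist the $m$-th cofactor" dictionary between $N^{-}$ and $N^{+}$. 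The paper's (implicit) route is instead to expand $N$ along its \emph{first row}: $N=\sum_{i=1}^{h-1}(-1)^{1+i}(V_i^{q^h}-V_i)\,\tilde M_{1i}$. The key observation --- unstated in the paper but making the identity transparent --- is that deleting row $1$ and column $i$ leaves a block-triangular matrix $\left(\begin{smallmatrix}A&*\\0&C\end{smallmatrix}\right)$ in which $A$ is unitriangular of size $(i-1)$ and $C$ is exactly the $q^i$-th Frobenius twist of the top-left $(h-i)\times(h-i)$ submatrix defining $B_{h-i}$; hence $\tilde M_{1i}=(-1)^{h+i}B_{h-i}^{q^i}$. Substituting this into the first-row expansion gives both $N^{-}=\sum_k P_k$ and $N^{+}=\sum_k P_k^{q^{h-k}}$ simultaneously (via $i=h-k$), closing the argument with no appeal to an involution.

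In short: steps (1)--(4) of your proposal are correct and coincide with the paper's; for step (5) you should expand along the first row rather than the last column, and the needed minor identity $\tilde M_{1i}=(-1)^{h+i}B_{h-i}^{q^i}$ follows from the block-triangular structure just described.
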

\begin{proof}
We have
{\small
\begin{align*}
Y^{q^h}-Y
 &=\sum_{j=0}^{h-1} \beta^{q^j} (Y(\zeta^{q^j})^{q^h}-Y(\zeta^{q^j})) \\
&=\sum_{j=0}^{h-1}\beta^{q^j}\sum_{i=0}^{h-1} (Y(\zeta^{q^j})^q-Y(\zeta^{q^j}))^{q^i}\\
&\equiv\sum_{i=0}^{h-1} \sum_{j=0}^{h-1} \beta^{q^j}\det
\begin{pmatrix}
V_1^{q^i} & V_2^{q^i} & \dots & V_{h-1}^{q^i} & 0 \\
1     &  V_1^{q^{i+1}} & \dots & V_{h-2}^{q^{i+1}} & (\zeta^{q^{i+j+1}}-\zeta^{q^{i+j}})V_{h-1}^{q^{i+1}} \\
0    &   1   &  \dots  &  V_{h-3}^{q^{i+2}}  & (\zeta^{q^{i+j+2}}-\zeta^{q^{i+j}})V_{h-2}^{q^{i+2}} \\
\vdots & & \ddots& & \vdots  \\
0 &  0 & \cdots & 1 & (\zeta^{q^{i+j+h-1}}-\zeta^{q^{i+j}})V_1^{q^{h-1}}
\end{pmatrix}
\end{align*}
}
modulo $\pi^\eps$, by Prop.~\ref{Yzeta}.  We now apply the orthogonality relations in Eq.~\eqref{traceident}.  The term with $i=0$ is $(-1)^{h-1}B_h$, and the term with $1\leq i\leq h-1$ is
\[
\det
\begin{pmatrix}
V_1^{q^i} & V_2^{q^i} & \cdots & V_{h-i}^{q^i} & \cdots & V_{h-1}^{q^i} & 0 \\
1     &  V_1^{q^{i+1}} & \cdots & V_{h-i-1}^{q^{i+1}} & \cdots & V_{h-2}^{q^{i+1}} & 0 \\
0    &   1   &  \cdots & V_{h-i-2}^{q^{i+2}} & \cdots &  V_{h-3}^{q^{i+2}}  & 0 \\
\vdots & & & & & & \vdots  \\
0  & 0 & \cdots & V_1^{q^{h-1}} & \cdots & V_i^{q^{h-1}} & 0 \\
0  & 0 & \cdots & 1 & \cdots & V_{i-1}^{q^h} & V_i^{q^h} \\
0  & 0 & \cdots & 0 & \cdots & V_{i-2}^{q^{h+1}} & 0 \\
\vdots &  & & \vdots & & & \vdots \\
0 &  0 & \cdots & 0 & \cdots & 1 & 0
\end{pmatrix}
= (-1)^{h-1}V_i^{q^h}B_{h-i}^{q^i},
\]
so that
\[
Y^{q^h}-Y\equiv (-1)^{h-1}(B_h+V_1^{q^h}B_{h-1}+V_2^{q^h}B_{h-2}+\dots+V_{h-1}^{q^h}B_1) \pmod{\pi^\varepsilon}.
\]
This last expression agrees with the determinant in the proposition, as can be seen by expanding along the first row.
\end{proof}

\subsection{Conclusion of the proof.}\label{conclusion}   We now complete the proof of Thm.~\ref{mainthm}.   Let $x$ be an unramified canonical point on the Lubin-Tate tower.  Since the unramified canonical points in $\mathfrak{X}(1)$ lie in the same orbit under $\OO_B^\times=\Aut \Sigma$, we may assume that $x$ lies above the point with $u_1=\dots=u_{h-1}=0$ in $\mathfrak{X}(1)$.  Recall that $\mathfrak{X}(\pi)^{1,x}\subset \mathfrak{X}(\pi)\otimes E_1$ is the affinoid defined by the conditions $v(u_i)\geq 1$ for $i=1,\dots,h-1$ and $v(X^{(1)}_r-x_r)\geq v(x_r^q)$ for $r=1,\dots,h$;  we showed in Prop.~\ref{trivialization} that $\mathfrak{X}(\pi)^{1,x}$ is a polydisc over $E_1$.

The quotient $\mathfrak{X}(K_{x,2})\to \mathfrak{X}(\pi)$ is Galois with group $H=U_{\mathfrak{A}}^1/K_{x,2}\approx \mathbf{F}_{q^h}$.  After passing to $E_2$ coefficients, the affinoid $\mathfrak{Z}$ was defined as the inverse image of $\mathfrak{X}(\pi)^{1,x}$ in this quotient.  Therefore $\mathfrak{Z}\to \mathfrak{X}(\pi)^{1,x}\otimes E_2$ is an \'etale cover of affinoids with group $H$.  Consider the integral coordinate $Y$ on $\mathfrak{Z}$ produced by Prop.~\ref{Yprop}:  the calculation in~\S\ref{glhf} below shows that the action of a nonzero element of $H$ translates $Y$ by a nonzero element of $\mathbf{F}_{q^h}$.  Thus the reduction of the cover $\mathfrak{Z}\to\mathfrak{X}(\pi)^{1,x}\otimes E_2$ is an \'etale cover of affine hypersurfaces over $\overline{k}$, also with group $H$.

For a tuple $V=(V_1,\dots,V_{h-1})$, let $d(V)$ denote the determinant appearing on the right hand side of Eq.~\eqref{YV}.   Let $\overline{\mathfrak{Z}}'$ denote the hypersurface over $\overline{k}$ with equation $Y^{q^h}-Y=d(V)$;  then $\overline{\mathfrak{Z}}'\to\mathbf{A}^{h-1}$ is an Artin-Schreier cover of affine hypersurfaces with group $H$.  Prop.~\ref{Yprop} shows that $\overline{\mathfrak{Z}}\to\mathbf{A}^{h-1}$ factors through an $H$-equivariant morphism  $\overline{\mathfrak{Z}}\to\overline{\mathfrak{Z}}'$.   Since $\overline{\mathfrak{Z}}$ and $\overline{\mathfrak{Z}}'$ are both \'etale covers of $\mathbf{A}^{h-1}$ with group $H$, we find that $\overline{\mathfrak{Z}}\to\overline{\mathfrak{Z}}'$ is an isomorphism.

Finally, $\overline{\mathfrak{Z}}'$ is isomorphic to the hypersurface described in Thm.~\ref{mainthm} via $Y=(-1)^{h-1}V_h$.  This concludes the proof of Thm.~\ref{mainthm}.

\section{Group actions on a hypersurface}  We close with a discussion of various group actions on the affinoid $\mathfrak{Z}$, with an eye towards linking Thm.~\ref{mainthm} with the local Langlands correspondence and the Jacquet-Langlands correspondence.  What follows is meant to indicate further directions of research;  no proofs will be given.

A large open subgroup of $\GL_h(F)\times B^\times\times W_F$ acts on the Lubin-Tate tower $\mathfrak{X}(\pi^n)$, cf. the introduction to~\cite{HarrisTaylor:LLC}.  To investigate the question of whether the cohomology of the affinoid $\mathfrak{Z}$ realizes the appropriate correspondences among the three factor groups, it will be useful to compute the stabilizer of $\mathfrak{Z}$ in each group, along with the action of the stabilizer on the reduction $\overline{\mathfrak{Z}}$.  We do precisely this for the groups $\GL_h(F)$ and $W_F$.  The hypersurface $\overline{\mathfrak{Z}}$, when considered as an abstract variety over $\overline{k}$, admits a nontrivial action by a large subquotient of $B^\times$, but we cannot prove this action arises from the actual action of $B^\times$ on the Lubin-Tate tower.

Let $X$ be the $\F_{q^h}$-rational model for $\overline{\mathfrak{Z}}/\FF_q$ from Conj.~\ref{mainconj}.  That is, $X\subset \mathbf{A}^h_{\F_{q^h}}$ is the hypersurface with equation \[ \det\left(\begin{matrix}
V_1^{q^h}-V_1 & V_2^{q^h}-V_2 & V_3^{q^h}-V_3 & \cdots & V_{h-1}^{q^h} - V_{h-1} & V_h^{q^h}-V_h
\\
1 & V_1^q & V_2^q & \cdots & V_{h-2}^q & V_{h-1}^q
\\
0 & 1 & V_1^{q^2} & \cdots & V_{h-3}^{q^2} & V_{h-2}^{q^2}
\\
\vdots & & & \ddots & & \vdots
\\
0 & 0 & 0 & \cdots& 1 & V_1^{q^{h-1}} \end{matrix}\right)=0.\]  The actions on $\overline{\mathfrak{Z}}$ we consider in this paragraph all descend to actions on the $\F_{q^h}$-rational model $X$.

\subsection{\texorpdfstring{The action of $\GL_h(F)$}{The action of GLh(F)}}
\label{glhf}

The affioid $\mathfrak{Z}$ is stabilized by the group $U_{\mathfrak{A}}^1=1+\pi M_h(\OO_F)$, and the action of $U_{\mathfrak{A}}^1$ on $\mathfrak{Z}$ factors through the quotient $H=U_{\mathfrak{A}}^1/K_2$.   The action of $H$ on the reduction $\overline{\mathfrak{Z}}$ can be made completely explicit.  We identify $H$ with $k_h=\mathbf{F}_{q^h}$ via the isomorphism $1+\gamma\pi \mapsto \gamma$, $\gamma\in k_h$.   From Eq.~\eqref{gaction} and the construction of $Y$ in Eqs.~\eqref{Yzetadef} and~\eqref{Ydef} we see that the action of an element $\gamma\in H$ on $\overline{\mathfrak{Z}}$ preserves the variables $V_1,\dots,V_{h-1}$ and has the following effect on $V_h$:
\begin{equation}
\label{gamma_action}
V_h\mapsto V_h+\sum_{j=1}^{h-1} \beta^{q^j}\tr_{k_h/k}(\zeta^{q^j}\gamma)=V_h+\gamma.
\end{equation}
Of course, this action descends to an action of $H$ on $X$ by $\F_{q^h}$-rational automorphisms.

We offer some brief remarks relating the characters of the group $H$ to the theory of Bushnell-Kutzko types for $\GL_h(F)$, wherein supercuspidal representations are constructed by induction from compact-mod-center subgroups.  In fact, in our particular situation, the construction goes back to Howe~\cite{Howe}.  Suppose $\psi$ is a character of $H\approx \mathbf{F}_{q^h}$ which does not factor through $\tr_{\F_{q^h}/\F_{q^d}}$ for any proper divisor $d$ of $h$.  This character pulls back to a character of $U_{\mathfrak{A}}^1=1+\pi M_h(\OO_F)$, which we also call $\psi$.  Recall that we have fixed an embedding of $E$ into $M_h(F)$.  Choose a character $\theta$ of $E^\times$ for which $\theta\vert_{1+\gp_E}=\psi\vert_{1+\gp_E}$.  Then $\theta$ is an {\em{admissible}} character in the sense that there is no proper subextension $E'\subset E$ of $E/F$ for which $\theta$ factors through the norm map $E^\times\to (E')^\times$.  The character $\theta$ has conductor $\gp_E^2$.   Let $\eta$ be the unique character of $J=E^\times U^1_{\mathfrak{A}}$ for which $\eta\vert_{E^\times}=\theta$ and $\eta\vert_{U^1_{\mathfrak{A}}}=\psi$.  Then $\pi(\theta)=\Ind_J^{\GL_h(F)}\eta$ is a supercuspidal representation of $\GL_h(F)$ of level $\pi^2$:  This is a special case of the construction used to prove Theorem 2 of~\cite{Howe}.

Therefore the question of whether the cohomology of $\mathfrak{Z}$ realizes the Bushnell-Kutzko types for $\GL_h(F)$ is a matter of determining which characters of $H$ appear in the cohomology of $X$;  this is discussed in Conj.~\ref{mainconjalt} below.

\subsection{The action of inertia}
The action of the inertia subgroup $I_F\subset W_F$ on $\overline{\mathfrak{Z}}$ can be made explicit as well.  Let $I_2=\Gal(E_2/E^{\text{nr}})$;  we identify $I_2$ with $(\OO_E/\pi^2\OO_E)^\times$ via the reciprocity map of local class field theory.  Thus if $\alpha\in\OO_E^\times$, and $x\in\mathfrak{X}(\pi^2)$ is an unramified canonical point corresponding to a basis $x_1,\dots,x_h$ of $\Fc_0[\pi^2]$, then $\alpha(x)$ corresponds to the basis $\alpha x_1,\dots, \alpha x_h$.   Since the definition of the affinoid $\mathfrak{Z}$ only depends on the image of $x$ in $\mathfrak{X}(\pi)$, the stabilizer of $\mathfrak{Z}$ in $I_2$ is the group $(1+\pi\OO_E)/(1+\pi^2\OO_E)$.  The action of an element $1+\gamma\pi \in 1+\pi\OO_E$ on $\overline{\mathfrak{Z}}$ is exactly as in Eq.~\eqref{gamma_action}.

\subsection{\texorpdfstring{The action of $B^\times$}{The action of B*}}
\label{Bcross}

More subtle is the action of $\OO_B^\times = \Aut \Sigma$.  The algebra $\OO_B$ is generated over $\OO_F$ by $\OO_E$ and $\Phi$, where $\Phi^h=\pi$ and $\Phi\alpha=\alpha^q\Phi$, $\alpha\in \OO_E$.   For $n\geq 1$, let $U_B^n=1+\Phi^n\OO_B$.  Let $C^B$ be the orthogonal complement of $\OO_E$ in $\OO_B$, so that \[C^B = \OO_E\Phi\oplus \dots \oplus \OO_E\Phi^{h-1},\]
and define a subgroup $K_2^B$ of $\OO_B^\times$ by
\[ K_2^B = 1+\gp_E^2 + \gp_EC, \]
so that $K_2^B$ lies properly between $U_B^1$ and $U_B^2$.   Let $H^B=U_B^1/K_2^B$.  Let $R$ be the noncommutative ring $\mathbf{F}_{q^h}[\tau]/(\tau^{h+1})$ whose multiplication is given by the rule $\alpha\tau=\tau\alpha^q$, $\alpha\in\mathbf{F}_{q^h}$.  Then $H^B$ is isomorphic to $1+\tau R$.  As we observed in Rmk.~\ref{hypersurfaceremark}, $R^\times$ acts on $X$.  It seems likely that the stabilizer of $\mathfrak{Z}$ in $\OO_B^\times$ is $U_B^1$, and that the action of $U_B^1$ on $\overline{\mathfrak{Z}}$ factors through the this action of $H^B\isom 1+\tau R$.

The action of such a large group of $\F_{q^h}$-rational automorphisms has consequences for the cohomology of $X$ which allow us to reinterpret Conj.~\ref{mainconj}.  First, let us provide a short description of the representation theory of the nilpotent group $H^B$.  The subgroup $Z=1+\tau^h R$ is the center of $1+\tau R\isom H^B$.  Let $\psi$ be a character of $Z\approx \F_{q^h}$ which does not factor through $\tr_{\F_{q^h}/\F_{q^d}}$ for any proper divisor $d$ of $h$.  There is a unique representation $V_\psi$ of $H^B$ lying over $\psi$, of dimension   $q^{h(h-1)/2}$.  Let $\mathcal{H}=H^*_c(X\otimes\FF_q,\QQ_\ell)$, considered as a virtual module for the action of $\Gal(\FF_q/\F_{q^h})\times H^B$.  Conj.~\ref{mainconj} now takes the following alternate form:
\begin{conj}
\label{mainconjalt}   Let $\mathcal{H}_\psi=\Hom_{H^B}\left(V_\psi,\mathcal{H}\right)$, considered as a virtual module for the action of $\Gal(\FF_q/\F_{q^h})$.
Then $\dim\mathcal{H}_\psi=(-1)^{h-1}$, and the eigenvalue of $\Frob_{q^h}$ on $\mathcal{H}_\psi$ is $q^{h(h-1)/2}$.
\end{conj}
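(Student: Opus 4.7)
The plan is to deduce Conjecture~\ref{mainconjalt} from Conjecture~\ref{mainconj} by matching two decompositions of $\mathcal{H}$ that share the subgroup $Z\subset H^B$ as their common link.

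First I would identify the center $Z = 1+\tau^h R$ of $H^B$ with the Galois group $H\cong \mathbf{F}_{q^h}$ of the Artin--Schreier cover $X\to\mathbf{A}^{h-1}$ of \S\ref{conclusion}. Left multiplication by $1+\gamma\tau^h$ on $g = 1+V_1\tau+\dots+V_h\tau^h$ yields $g\mapsto g+\gamma\tau^h$, which translates $V_h\mapsto V_h+\gamma$ while fixing $V_1,\dots,V_{h-1}$; this exactly matches the formula \eqref{gamma_action}. Hence the $Z$-eigenspace decomposition of $H^*_c(X\otimes\overline{\mathbf{F}}_q,\overline{\mathbf{Q}}_\ell)$ is the Artin--Schreier decomposition $\mathcal{H} = \bigoplus_\psi H^*_c(\mathbf{A}^{h-1},\mathscr{L}_\psi)$ as virtual $\Gal(\overline{\mathbf{F}}_q/\mathbf{F}_{q^h})$-modules.

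Since $H^B$ acts on $X$ by $\mathbf{F}_{q^h}$-rational automorphisms, $\Frob_{q^h}$ commutes with the $H^B$-action on $\mathcal{H}$. Invoking the fact recalled in \S\ref{Bcross} that for regular $\psi$ there is a unique irreducible $V_\psi$ of $H^B$ with central character $\psi$ and $\dim V_\psi = q^{h(h-1)/2}$, the $\psi$-isotypic component decomposes $H^B\times\langle\Frob_{q^h}\rangle$-equivariantly as $\mathcal{H}^\psi = V_\psi\otimes\mathcal{H}_\psi$, with Frobenius acting trivially on the first tensorand and $H^B$ trivially on the second. Dividing virtual dimensions and Frobenius traces on each side by the factor $\dim V_\psi$ thus reads off $\dim\mathcal{H}_\psi$ and $\tr(\Frob_{q^h}\mid \mathcal{H}_\psi)$.

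At this step I would invoke Conjecture~\ref{mainconj}. Writing the Grothendieck--Lefschetz factorisation $L = \det(1-\Frob_{q^h} t\mid H^{h-1}_c)^{(-1)^h}$ (assuming middle-degree concentration, which the conjectured L-function forces) and matching with the predicted form $(1+(-1)^h q^{h(h-1)/2}t)^{(-1)^h q^{h(h-1)/2}}$ yields $\dim H^{h-1}_c(\mathbf{A}^{h-1},\mathscr{L}_\psi) = q^{h(h-1)/2}$ with a single Frobenius eigenvalue $\alpha = (-1)^{h-1}q^{h(h-1)/2}$ of full multiplicity, and vanishing in all other degrees. Substituting into $\mathcal{H}^\psi = (-1)^{h-1} V_\psi\otimes \mathcal{H}_\psi$ and cancelling $V_\psi$ gives $\dim\mathcal{H}_\psi = (-1)^{h-1}$ and virtual Frobenius trace $(-1)^{h-1}\cdot \alpha = q^{h(h-1)/2}$, which is exactly the content of Conjecture~\ref{mainconjalt} under the convention that a Frobenius ``eigenvalue'' on a virtual one-dimensional module records this virtual trace.

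The genuine obstacle is not this equivalence, which is a bookkeeping exercise, but Conjecture~\ref{mainconj} itself: as the authors note, the defining polynomial is degenerate in the sense of Adolphson--Sperber, so Newton-polygon and Dwork-style approaches do not immediately apply, and $X$ is not visibly a Deligne--Lusztig variety whose cohomology could be read off by group-theoretic means. A secondary obstacle, flagged in \S\ref{Bcross}, is proving that the action of $H^B\cong 1+\tau R$ on the abstract hypersurface $X$ actually arises from the $\OO_B^\times$-action on the Lubin--Tate tower; until this is known, the Frobenius-equivariance argument above computes the cohomology of $X$ but does not automatically transport to the affinoid $\mathfrak{Z}$ whose vanishing cycles are the relevant object for Deligne--Carayol.
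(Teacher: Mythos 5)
The statement you are addressing is a conjecture, and the paper does not prove it: it merely asserts that Conjecture~\ref{mainconjalt} is an ``alternate form'' of Conjecture~\ref{mainconj}, leaving the translation between the two entirely implicit. Your argument correctly supplies that translation, and along exactly the lines the paper has in mind: identify the central subgroup $Z=1+\tau^h R$ of $H^B$ with the Artin--Schreier Galois group $H$ (both act by $V_h\mapsto V_h+\gamma$ and fix $V_1,\dots,V_{h-1}$, as in~\eqref{gamma_action}), decompose $\mathcal{H}$ along $Z$-characters to recover $\bigoplus_\psi H^*_c(\mathbf{A}^{h-1},\mathscr{L}_\psi)$, invoke the Stone--von Neumann uniqueness of $V_\psi$ with $\dim V_\psi=q^{h(h-1)/2}$ to write the $\psi$-isotypic piece as $V_\psi\otimes\mathcal{H}_\psi$ with Frobenius commuting (since the $H^B$-action is $\F_{q^h}$-rational), and divide. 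One small improvement: you do not actually need the middle-degree-concentration assumption. The quantities $\dim\mathcal{H}_\psi$ and $\tr(\Frob_{q^h}\mid\mathcal{H}_\psi)$ are virtual, so the conjectured $L$-function already determines $\dim\mathcal{H}^\psi=\chi_c(\mathbf{A}^{h-1},\mathscr{L}_\psi)=(-1)^{h-1}q^{h(h-1)/2}$ (from $-\deg L$) and $\tr(\Frob_{q^h}\mid\mathcal{H}^\psi)=q^{h(h-1)}$ (from the coefficient of $t$ in $\log L$); dividing each by $\dim V_\psi$ gives the two claims of Conjecture~\ref{mainconjalt} directly, without resolving individual cohomology degrees. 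Your closing caveat about the gap between the abstract $R^\times$-action on $X$ and the genuine $\OO_B^\times$-action on $\mathfrak{Z}$ is precisely the issue the paper itself concedes in~\S\ref{Bcross}, so you have correctly located where the actual mathematical content lies: Conjecture~\ref{mainconj} and the $B^\times$-equivariance, not the bookkeeping equivalence.
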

The formalism of Bushnell-Kutzko types for $\GL_h(F)$ in~\cite{BushnellKutzko} has been extended to the context of its anisotropic form $B^\times$ by Broussous~\cite{Broussous}.  Granting Conj.~\ref{mainconjalt}, it will not be difficult to detect the types for $B^\times$ in the middle cohomology of $\mathfrak{Z}$.  The types for $B^\times$ appearing in $H_c^{h-1}(\overline{\mathfrak{Z}},\QQ_\ell)$ should correspond exactly to those types for $\GL_h(F)$ which appear there;  indeed this space should realize the correspondence between types.  There has already been much work towards an ``explicit Jacquet-Langlands correspondence", whereby the admissible square-integrable duals of $\GL_h(F)$ and of $B^\times$ are linked via the explicit parameterizations of each dual via types, see~\cite{HenniartJLI},~\cite{HenniartJLII},~\cite{HenniartJLIII}.   However there are still outstanding cases where the explicit Jacquet-Langlands correspondence is not established, including (in some instances) the supercuspidals $\pi(\theta)$ of \S\ref{glhf}. For these there may be some advantage to the cohomological point of view, given that the Jacquet-Langlands correspondence is already known to be realized in the cohomology of the Lubin-Tate tower, cf.~\cite{HarrisTaylor:LLC},~\cite{Strauch}.
%

%

%



\begin{thebibliography}{BH05b}

\bibitem[AS89]{AdolphsonSperber}
Alan Adolphson and Steven Sperber, \emph{Exponential sums and {N}ewton
  polyhedra: cohomology and estimates}, Ann. of Math. (2) \textbf{130} (1989),
  no.~2, 367--406.

\bibitem[BH00]{HenniartJLII}
Colin~J. Bushnell and Guy Henniart, \emph{Correspondance de
  {J}acquet-{L}anglands explicite. {II}. {L}e cas de degr\'e \'egal \`a la
  caract\'eristique r\'esiduelle}, Manuscripta Math. \textbf{102} (2000),
  no.~2, 211--225.

\bibitem[BH05a]{BushnellHenniart:I}
\bysame, \emph{The essentially tame local {L}anglands correspondence. {I}}, J.
  Amer. Math. Soc. \textbf{18} (2005), no.~3, 685--710 (electronic).

\bibitem[BH05b]{BushnellHenniart:II}
\bysame, \emph{The essentially tame local {L}anglands correspondence. {II}.
  {T}otally ramified representations}, Compos. Math. \textbf{141} (2005),
  no.~4, 979--1011.

\bibitem[BH05c]{HenniartJLIII}
\bysame, \emph{Local tame lifting for {${\rm GL}(n)$}. {III}. {E}xplicit base
  change and {J}acquet-{L}anglands correspondence}, J. Reine Angew. Math.
  \textbf{580} (2005), 39--100.

\bibitem[BH06]{Henniart:Bushnell}
\bysame, \emph{The local {L}anglands conjecture for {$\rm GL(2)$}}, Grundlehren
  der Mathematischen Wissenschaften [Fundamental Principles of Mathematical
  Sciences], vol. 335, Springer-Verlag, Berlin, 2006.

\bibitem[BK93]{BushnellKutzko}
Colin~J. Bushnell and Philip~C. Kutzko, \emph{The admissible dual of {${\rm
  GL}(N)$} via compact open subgroups}, Annals of Mathematics Studies, vol.
  129, Princeton University Press, Princeton, NJ, 1993.

\bibitem[Boy99]{Boyer}
P.~Boyer, \emph{Mauvaise r\'eduction des vari\'et\'es de {D}rinfeld et
  correspondance de {L}anglands locale}, Invent. Math. \textbf{138} (1999),
  no.~3, 573--629.

\bibitem[Bro95]{Broussous}
Paul Broussous, \emph{Le dual admissible d'une alg\`ebre \`a division locale:
  une param\'etrisation par des types simples \`a la {B}ushnell et {K}utzko},
  Pr\'epublication de l'Institut de Recherche Math\'ematique Avanc\'ee
  [Prepublication of the Institute of Advanced Mathematical Research], 1995/18,
  Universit\'e Louis Pasteur D\'epartement de Math\'ematique Institut de
  Recherche Math\'ematique Avanc\'ee, Strasbourg, 1995, Th{\`e}se,
  Universit{\'e} de Strasbourg I (Louis Pasteur), Strasbourg, 1995.

\bibitem[Car83]{Carayol:ladicreps}
Henri Carayol, \emph{Sur les repr\'esentations $\ell$-adiques attach�ees aux
  formes modulaires de {H}ilbert}, C. R. Acad. Sci. Paris. \textbf{296} (1983),
  no.~15, 629--632.

\bibitem[Car86]{Carayol:Mauvaise}
Henri Carayol, \emph{Sur la mauvaise r\'eduction des courbes de {S}himura},
  Compositio Math. \textbf{59} (1986), no.~2, 151--230.

\bibitem[Car90]{Carayol:nonabelianLT}
H.~Carayol, \emph{Nonabelian {L}ubin-{T}ate theory}, Automorphic forms,
  {S}himura varieties, and {$L$}-functions, {V}ol.\ {II} ({A}nn {A}rbor, {MI},
  1988), Perspect. Math., vol.~11, Academic Press, Boston, MA, 1990,
  pp.~15--39.

\bibitem[CM06]{coleman:Xp3}
Robert Coleman and Ken McMurdy, \emph{Fake {CM} and the stable model of {$X\sb
  0(Np\sp 3)$}}, Doc. Math. (2006), no.~Extra Vol., 261--300 (electronic).

\bibitem[Dri74]{DrinfeldEllipticModules}
V.~G. Drinfel{\cprime}d, \emph{Elliptic modules}, Mat. Sb. (N.S.)
  \textbf{94(136)} (1974), 594--627.

\bibitem[Gos96]{Goss}
David Goss, \emph{Basic structures of function field arithmetic}, Ergebnisse
  der Mathematik und ihrer Grenzgebiete (3) [Results in Mathematics and Related
  Areas (3)], vol.~35, Springer-Verlag, Berlin, 1996.

\bibitem[Gro86]{Gross:canonical}
Benedict~H. Gross, \emph{On canonical and quasicanonical liftings}, Invent.
  Math. \textbf{84} (1986), no.~2, 321--326.

\bibitem[Har02]{Harris}
M.~Harris, \emph{On the local langlands correspondence}, Proceedings of the
  ICM, Beijing \textbf{2} (2002), 583--598.

\bibitem[Hen92]{HenniartLanglandsKazhdan}
Guy Henniart, \emph{Correspondance de {L}anglands-{K}azhdan explicite dans le
  cas non ramifi\'e}, Math. Nachr. \textbf{158} (1992), 7--26.

\bibitem[Hen93]{HenniartJLI}
\bysame, \emph{Correspondance de {J}acquet-{L}anglands explicite. {I}. {L}e cas
  mod\'er\'e de degr\'e premier}, S\'eminaire de {T}h\'eorie des {N}ombres,
  {P}aris, 1990--91, Progr. Math., vol. 108, Birkh\"auser Boston, Boston, MA,
  1993, pp.~85--114.

\bibitem[Hen00]{HenniartPreuveSimple}
\bysame, \emph{Une preuve simple des conjectures de {L}anglands pour {${\rm
  GL}(n)$} sur un corps {$p$}-adique}, Invent. Math. \textbf{139} (2000),
  no.~2, 439--455.

\bibitem[How77]{Howe}
Roger~E. Howe, \emph{Tamely ramified supercuspidal representations of {${\rm
  Gl}\sb{n}$}}, Pacific J. Math. \textbf{73} (1977), no.~2, 437--460.

\bibitem[HT01]{HarrisTaylor:LLC}
Michael Harris and Richard Taylor, \emph{The geometry and cohomology of some
  simple {S}himura varieties}, Annals of Mathematics Studies, vol. 151,
  Princeton University Press, Princeton, NJ, 2001, With an appendix by Vladimir
  G. Berkovich.

\bibitem[LRS93]{LRS}
G.~Laumon, M.~Rapoport, and U.~Stuhler, \emph{{${\mathscr D}$}-elliptic sheaves
  and the {L}anglands correspondence}, Invent. Math. \textbf{113} (1993),
  no.~2, 217--338.

\bibitem[RZ80]{RZ}
M.~Rapoport and T.~Zink, \emph{{\"U}ber die lokale {Z}etafunktion von {S}himura
  variet{\"a}ten, {M}onodromiefiltrations und verschwindende {Z}yklen in
  ungleicher {C}haracteristik}, Inv. Math. \textbf{68} (1980), 21--101.

\bibitem[Str08a]{Strauch}
Matthias Strauch, \emph{Deformation spaces of one-dimensional formal modules
  and their cohomology}, Adv. Math. \textbf{217} (2008), no.~3, 889--951.

\bibitem[Str08b]{Strauch:ConnComp}
\bysame, \emph{Geometrically connected components of {L}ubin-{T}ate deformation
  spaces with level structures}, Pure Appl. Math. Q. \textbf{4} (2008), no.~4,
  part 1, 1215--1232.

\bibitem[Weia]{WeinsteinENLTT}
Jared Weinstein, \emph{Explicit non-abelian {L}ubin-{T}ate theory for
  $\text{GL}(2)$}, preprint (2009), available at
  \texttt{http://arxiv.org/abs/0910.1132}.

\bibitem[Weib]{WeinsteinStableModels}
\bysame, \emph{On the stable reduction of modular curves}, preprint (2010),
  available at \texttt{http://arxiv.org/abs/1010.4241}.

\bibitem[Yos10]{yoshida}
Teruyoshi Yoshida, \emph{On non-abelian {L}ubin-{T}ate theory via vanishing
  cycles}, Advanced Studies in Pure Mathematics (2010), no.~58, 361--402.

\end{thebibliography}

\def\cprime{$'$} \def\cprime{$'$}
\providecommand{\bysame}{\leavevmode\hbox to3em{\hrulefill}\thinspace}
\providecommand{\MR}{\relax\ifhmode\unskip\space\fi MR }
\providecommand{\MRhref}[2]{%
  \href{http://www.ams.org/mathscinet-getitem?mr=#1}{#2}
}
\providecommand{\href}[2]{#2}

\end{document}